\newtheorem{thm}{Theorem}
\newtheorem{prop}{Proposition}
\newtheorem{cor}{Corollary}
\newtheorem{lemma}{Lemma}
\providecommand{\keywords}[1]{\textbf{{Keywords: }} #1 \\}
\newcommand{\E}{\mathbb{E}}
\newcommand{\s}{\sigma}
\newcommand{\e}{{\rm e}}
\def \b {\beta}
\def \l {\lambda}
\def \m {\mu}
\def \d {\delta}
\def \a {\alpha}
\def \g {\gamma}
\def \k {\kappa}
\def \th {\theta}
\def \s {\sigma}
\def \t {\tau}
\def \e {\varepsilon}
\def \P {P}
\def \dd {{\rm d}}
\def \hx {\hat{x}}
\def \hy {\hat{y}}
\def \mui {\mu_\infty^\star}
\def \muT {\mu_T^\star}
\def \Xlm {X_{\mu}}
\def \Qlm {Q_{\mu}}
\def \Qm {Q_{\mu}}
\def \klm {\kappa_{\mu}}
\def \iy {\infty}
\def \Pih {\hat{\Pi}_{T}}
\def \muh {\hat{\mu}_T^\star}
\newcommand{\ed}{\,{\buildrel d \over =}\,}
\numberwithin{equation}{section}
\begin{document}

\title{Transient error approximation in a L\'evy queue}
\author{Britt Mathijsen\footnote{
Department of Mathematics and Computer Science, Eindhoven
University of Technology, P.O. Box 513, 5600 MB Eindhoven, The
Netherlands (b.w.j.mathijsen@tue.nl)} 
\and Bert Zwart\footnotemark[1]\ \footnote{Centrum Wiskunde \& Informatica, P.O. Box 94079,
1090 GB, Amsterdam, The Netherlands (bert.zwart@cwi.nl)} 
}
\date{}
\maketitle

\begin{abstract}
Motivated by a capacity allocation problem within a finite planning period, we conduct a transient analysis of a single-server queue with L\'evy input. From a cost minimization perspective, we investigate the error induced by using stationary congestion measures as opposed to time-dependent measures. Invoking recent results from fluctuation theory of L\'evy processes, we derive a refined cost function, that accounts for transient effects. This leads to a corrected capacity allocation rule for the transient single-server queue. Extensive numerical experiments indicate that the cost reductions achieved by this correction can by significant.
\end{abstract}
 
\keywords{Single-server queue, transient analysis, L\'evy processes, optimization.}

\section{Introduction}

% STAGE 1

The issue of matching a service system's capacity to stochastic demand induced by its clients arises in many practical settings. Typically, the resources available to satisfy demand are scarce and hence expensive. This forces the manager to consider a trade-off between the system efficiency and the quality of service perceived by its clients. In this paper, we focus on this trade-off in the context of the $M/G/1$ queue, in which the variable amendable for optimization is the server speed $\mu$.

In general, optimizing the server speed $\m$ in a single-server queue in time-homogeneous environment, while trading off congestion levels against capacity allocation cost, does not pose any technical challenges. Typically, the objective function to be minimized, the total cost function, has the shape
\begin{equation}\label{eq:intro}
\Pi_\iy(\mu) = \E[Q_\mu(\infty)] + \a\mu = \frac{\l\E[B^2] }{2(\mu-\l\E[B])} + \a\mu,
\end{equation}
where $\E[Q_\mu(\infty)]$ denotes the expected steady-state amount of work given server speed $\m$, and $B$ describes the service requirement per arrival. The parameter $\a>0$ represents the relative capacity allocation costs incurred by deploying service rate $\mu$. This one-dimensional optimization problem yields the optimizer
\begin{equation}
\mui = \lambda \E[B] + \sqrt{\frac{\l\E[B^2]}{2\a}}.
\end{equation}
Despite the simplicity and tractability of the problem described above, the presence of the \emph{steady-state} measure in the cost function in \eqref{eq:intro} should be handled carefully. By employing this particular cost structure, one automatically agrees with the underlying assumption of the system being sufficiently close to its steady state. 
However, referring the practical applications of the single-server model, system parameters rarely remain constant over time. Moreover, planning periods for the optimization problem are naturally finite. Hence, the \emph{true} expected costs incurred, which we denote by $\Pi_T(\mu)$, depend on the length of the planning period $T$. Consequently, the usage of steady-state models for decision making needs to be justified by a more elaborate time-dependent or \emph{transient} analysis for these type of settings.

The time-dependent behavior of the single-server queue received much attention in queueing theory. First efforts to analyze the time-dependent properties of the $M/G/1$ queue date back to the 1950s and 1960s, e.g \cite{Kendall1951,Takacs1955,Takacs1962,Gaver1959,Benes1957}. The analyses in these papers mostly yield implicit expressions for performance characteristics through Laplace transforms, integro-differential equations and infinite convolutions.
More specifically, there is vast literature on the transient analysis of the $M/M/1$ queue, with the goal to derive explicit expressions for queue length characteristics, see e.g. \cite{Prabhu1964,Cohen1969,Pegden1982,Abate1987}. 
These works provide a variety of explicit expressions for the transient dynamics, although the complexity of the resulting expressions, typically involving Bessel functions, expose the intricate intractability of the matter. Consequently, approximation methods for insightful quantification of the dynamics based on numerical \cite{Neuts1966} or asymptotic methods, have become prevalent in more recent literature.
The asymptotic methods either exploit knowledge on the evolution of the queueing process as time $t$ grows large \cite{Newell1982,Odoni1983,Abate1987}, or the arrival rate $\l$ is increased to infinity \cite{Gaver1968,Abate1987a,Abate1987b}. 
It is noteworthy that a substantial contribution to the transient literature is made by Abate and Whitt \cite{Abate1987a,Abate1987b,Abate1987,Abate1994} who exploit the existence of a decomposition of the mean transient queue length and obtain expressions for the moments of the queue length and virtual waiting through probabilistic arguments in several queueing models. 
More recently, asymptotic methods have been used to justify the application of stationary performance measures in Markovian environments or to refine them, see e.g. \cite{Green1991,Whitt1991}.
Other approximative methods under the name of uniform acceleration expansions \cite{Massey1998} have been developed to reveal the asymptotic behavior of the single-server queue as a function of $t$, which are moreover able to capture time-varying arrival rates.

% STAGE 3

The majority of the works mentioned above do reflect on the error imposed by usage of steady-state performance metrics instead of the correct time-dependent counterpart. However, no light has been shed on the accumulation of this error over a finite period of time. To the best of our knowledge, the only work that addresses this issue is the paper by Steckley and Henderson \cite{Steckley2007}, who compute an approximation for the error accumulated between the steady-state and transient delay probability. Our analysis on the other hand is centered around the mean workload, which requires a different approach. In addition, the focus in \cite{Steckley2007} is on performance measures only, while the main goal of our paper is to investigate the quality of staffing rules. 

Although the $M/G/1$ queue serves as the leading example in our analysis, we choose to use a more general framework for the arrival process of the queue. Namely, we let the server face a L\'evy process.
This gives the advantage that once we have obtained the results, we can apply them to broader queue input classes, such as Brownian motion and the Gamma process.

% STAGE 4

To shed light on the influence of the transience of the queueing process on traditional staffing questions, we will study the capacity allocation problem in the context of cost minimization in which the objective function is $\Pi_T(\mu)$, i.e. a function of both $\mu$ and $T$. We investigate how the invalidity of the stationary assumption is echoed through the operational cost accounting for congestion-related penalties. 

Furthermore, we establish a result on the strict convexity of the function $\Pi_T(\mu)$, for almost all values of $T$ (with a few minor exceptions for certain deterministic initial states), which is an essential property for convergence of both cost function and corresponding minimizer to their stationary counterparts.

As it will appear that an exact analysis of this disparity is intractable, we will present an explicit approximate correction to the conventional stationary objective function given by $\Psi(\mu)/T$ and prove that
\begin{equation}
\Pi_T(\mu) = \Pi_\iy(\mu) + \frac{\Psi(\mu)}{T} + O(1/T^2),
\end{equation}
with the help of recent results from the fluctuation theory of L\'evy processes. 
 Based on this refinement we ultimately examine how incorporating transient effects reflects in setting the optimal capacity level and propose a refinement to the steady-state capacity allocation rule, 
\begin{equation}
\muT = \mui + \frac{\mu_\bullet}{T} + o(1/T).
\end{equation}
We moreover deduce an explicit expression for $\mu_\bullet$ in terms of the initial state and the first three moments of the service requirement per arrival.
It is noteworthy that similar refined square-root staffing rules have been proposed for multi-server queues in the Halfin-Whitt regime, see e.g. \cite{Janssen2008,Janssen2011,Janssen2015,Randhawa2014,Zhang2012}. In those cases, the relevant decision value is the number of servers  and refinements are derived for $\l\to\iy$, whereas we consider the regime $T\to\infty$. 

  Building upon the insights gained through the analysis of this optimality gap, we reflect on the parameter settings of the underlying queueing process in which our refined capacity sizing rule yields significant improvement and in which cases it has little effect. Special emphasis is put on the relationship between the accuracy of the standard procedure and the length of the planning period.

The remainder of the paper is structured as follows. Section 2 is devoted to the model description and presents some preliminary results. The main result will be given in Section 3 and results regarding the optimization problem will be discussed in Section 4, followed by the validation of our novel techniques through numerical experiments in Section 5. We will give some concluding remarks and topics for further research in Section 6. We have deferred all proofs to the Appendix.

\section{Model description}

\subsection{A queueing model with L\'evy input \label{sec:levymodel}}
The model that inspired our study is the standard $M/G/1$ queue starting out of equilibrium. Customers arrive to the queue according to a Poisson process with rate $\l$ and each arrival has service requirement $B_i$, stemming from a common random variable $B$. 
Without loss of generality we will assume $\E[B] = 1$ throughout. The server is able to remove $\mu$ amounts of work from the system per time unit; a variable we will refer to as the \emph{server speed}. 
E.g. if $\mu = 3$ and two customers are in the system with remaining service times $4$ and $2$, then  the queue will be empty 2 time units later, provided that no new arrivals occur in the meantime.
Let $N_\l(t)$ denote the number of arrivals until time $t$.
Accordingly, the total work generated by the customers is given by
\begin{equation}
Z_\l(t) = \sum_{i=1}^{N_\l(t)} B_i.
 \end{equation} 
Furthermore, define $\Xlm(t) = Z_\l(t) - \mu t$. We call $\Xlm$ the \emph{net-input process}. 
More generally, we assume throughout the paper that $\Xlm$ is a L\'evy process.
Specifically, we let $Z_\l$ be of the form $Z_\l(t) = U(\l t)$, where $U$ is a spectrally positive L\'evy process generated by the triplet $(a,\s,\nu)$ and $\E[U(1)] = 1$. 
This restriction to spectrally positive processes is equivalent to stating $\nu(-\infty,0)=0$ and is a vital assumption to our analysis. 
Subsequently, we assume the net-input process $\Xlm$ to be
\begin{equation}
\label{eq:Xlmprocess}
\Xlm(t) = U(\l t) - \mu t, \qquad t \geq 0.
\end{equation}
Note that by setting $a=\s=0$ and $\nu = \l\, F_B$, where $F_B$ is the cumulative distribution function of $B$, we recover the original $M/G/1$ queue. 
The stochastic process central to our analysis is the \emph{workload process} $Q_{\mu}(t)$, $t\geq 0$, which describes the amount of work the server is facing at time $t$. 
The net-input process $\Xlm$ completely determines the trajectory of $Q_{\mu}$, namely
\begin{equation}\label{eq:Qlm}
Q_{\mu}(t) = \max\{ Q(0) + \Xlm(t), \sup_{s\in[0,t]} [\Xlm(t)-\Xlm(s)]\}, \qquad t\geq 0,
\end{equation}
where $Q(0)$ is the initial workload in the system. 
In fact, $Q_{\mu}$ is the reflected version of $\Xlm$ with reflection barrier at zero.
Careful inspection of the structure also reveals that $X_{\m}(t) \equiv X_{\l/\mu,1}(\mu t) \equiv X_{1,\mu/\l}(\l t)$, so that
\begin{equation}
\label{eq:Qidentity}
Q_{\m}(t) \ed Q_{\l/\mu,1}(\mu t) \ed Q_{1,\mu/\l}(\l t)
\end{equation}
for all $\l,\m,t>0$. 
This identity will prove to be convenient for numerical analysis in Section \ref{sec:numerics}.

The process $Q_{\m}$ is a natural indicator of the level of congestion in the system and therefore a good choice for quantifying the Quality of Service (QoS) received by a client.
We remark that alternative processes characterizing congestion in the system can be directly deduced from $Q_{\m}(t)$. For example, consider the virtual waiting time process $V_{\mu}(t)$, which is the waiting time a customer would experience if he arrives at time $t$. This satisfies the relation $V_{\mu}(t) \equiv Q_{\m}(t)/\mu$ for all $t\geq 0$.
Likewise, the expected number of the customers in the system $L_{\m}(t)$ at time $t\geq 0$ is given by Little's law
\begin{equation}
\E[L_{\m}(t)] = \l\, \E[V_{\m}(t)] = \frac{\l}{\mu}\, \E[Q_{\mu}(t)].
\end{equation}
To facilitate our investigation of the queueing model, we end this subsection by introducing some notation regarding the net-input and workload process and by stating a useful preliminary result concerning the stationary process $\Qlm(\iy)$. 
Throughout the paper we assume $\mu>\l$ to ensure ergodicity of the queue and existence of the limit
\begin{equation}
\Qlm(\iy) := \lim_{t\to\iy} \Qlm(t),
\end{equation}
for any initial state $Q(0)$. This random variable necessarily coincides with the stationary distribution of $\Qlm(t)$. 
By $\k_U(\cdot)$ and $\k_{\mu}(\cdot)$ we denote the L\'evy exponents of the processes $U$ and $\Xlm$, respectively:
\begin{equation}
\k_{\m}(\th) = \log \E[e^{\th \Xlm(1)}] = \log \E[e^{\th(U(\l) - \mu)}] = \l \k_U(\th) - \mu \th.
\end{equation}
Furthermore, define $u_k = \E[\{U(1) - \E U(1)\}^k]$ for $k=2,3,...$. 
Using this representation we obtain the following preliminary result.
\begin{lemma}\label{lemma:workloadmoments}
Let $\E|U(1)|<\infty$, $u_2, u_3 < \iy$ and $\mu > \l$. If $Q_{\mu}(\infty)$ represents the steady-state distribution of the workload process, then
\begin{equation}
\E[\Qlm(\infty)] = \frac{\l u_2}{2(\mu-\l)},\qquad \E[Q_{\mu}^2(\iy)]=\frac{\l^2u_2^2}{2(\mu-\l)^2} + \frac{\l u_3}{3(\mu-\l)}.
\end{equation}
\end{lemma}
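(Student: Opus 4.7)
The plan is to read off both moments from the Taylor expansion at $\th=0$ of the Laplace--Stieltjes transform (LST) of $Q_\m(\iy)$, which is furnished by the generalized Pollaczek--Khinchine formula for reflected spectrally positive L\'evy processes.

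First, I would combine the workload representation \eqref{eq:Qlm} with the standard time-reversal identity for L\'evy processes and let $t\to\iy$ (using $\m>\l$, so that $X_\m(t)\to -\iy$ almost surely) to conclude that $Q_\m(\iy)\ed\sup_{s\ge 0} X_\m(s)$, independently of the initial state. Since $X_\m$ is spectrally positive with $\k_\m'(0)=\l-\m<0$, the generalized Pollaczek--Khinchine identity from L\'evy fluctuation theory then gives
\begin{equation*}
\E\bigl[e^{-\th Q_\m(\iy)}\bigr] \;=\; \frac{(\m-\l)\th}{\k_\m(-\th)}, \qquad \th\ge 0.
\end{equation*}

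Next, since $\k_U$ is the cumulant generating function of $U(1)$---so that $\k_U'(0)=\E U(1)=1$, $\k_U''(0)=u_2$, and $\k_U'''(0)=u_3$---the hypothesis $u_2,u_3<\iy$ furnishes the one-sided expansion
\begin{equation*}
\k_\m(-\th) \;=\; (\m-\l)\th + \tfrac{\l u_2}{2}\th^2 - \tfrac{\l u_3}{6}\th^3 + o(\th^3), \qquad \th\downarrow 0.
\end{equation*}
Substituting this in the Pollaczek--Khinchine formula, dividing numerator and denominator by $(\m-\l)\th$, and expanding $1/(1+x) = 1 - x + x^2 + O(x^3)$ yields
\begin{equation*}
\E\bigl[e^{-\th Q_\m(\iy)}\bigr] \;=\; 1 - \frac{\l u_2}{2(\m-\l)}\th + \left(\frac{\l u_3}{6(\m-\l)} + \frac{\l^2 u_2^2}{4(\m-\l)^2}\right)\th^2 + o(\th^2),
\end{equation*}
and matching the coefficients with $1 - \th\E[Q_\m(\iy)] + \tfrac12\th^2\E[Q_\m^2(\iy)] + o(\th^2)$ delivers both stated identities.

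The main technical obstacle is that $u_2,u_3<\iy$ is far weaker than the existence of any exponential moment of $U(1)$, so $\k_U(\th)$ may be infinite for every $\th>0$ and the displays above cannot be interpreted as ordinary Taylor series. I would resolve this by reading both expansions as \emph{right-sided} Taylor developments at $0$: the moment condition $u_k<\iy$ is equivalent to $\int_{|x|>1} x^k\,\nu(\dd x)<\iy$, which is exactly what is needed for $\th\mapsto\k_U(-\th)$ to be $k$-times right-differentiable at the origin with derivatives equal to the corresponding cumulants of $U(1)$; on the workload side, standard moment-transfer results for reflected spectrally positive L\'evy processes ensure $\E Q_\m^2(\iy)<\iy$ and twice right-differentiability of its LST at $0$. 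Equating these one-sided derivatives in the Pollaczek--Khinchine identity, rather than manipulating formal power series, then turns the sketch into a rigorous argument.
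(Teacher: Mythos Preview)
Your proposal is correct and follows essentially the same route as the paper: both start from the generalized Pollaczek--Khinchine transform $\E[e^{-\th Q_\m(\iy)}]=\dfrac{(\m-\l)\th}{\k_\m(-\th)}$ and extract the first two moments by differentiating at zero. The only cosmetic difference is that the paper computes the limits via l'H\^opital's rule on the quotient, whereas you expand the denominator and match Taylor coefficients; your added remark on one-sided differentiability under mere moment assumptions is more careful than the paper's treatment, which simply applies l'H\^opital formally.
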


\subsection{Finite horizon}

For the purpose of this paper, we are interested in the dynamics of the workload process within a fixed time frame of length $T>0$. 
For all $0\leq t \leq T$, we assume that the parameters of the queue, $\l,\m,u_2,u_3$, remain unchanged. 
If at $t=0$ the queue is not in steady-state corresponding to the specified parameters of the starting period, the process $\{ \Qlm(t)\,:t\in[0,T] \}$ differs from its stationary counterpart $\Qlm(\infty)$. 
To illustrate this, Figure \ref{fig:transientmeans} depicts the expected value $\Qlm$ in a $M/M/1$ queue as a function of time for several initial workloads $Q(0)$ for a particular setting of $\l$ and $\m$. 
Clearly, transient behavior of $\E[\Qlm(t)]$, for $Q(0) \neq \Qlm(\iy)$, differs significantly from the steady-state mean with the same system parameters. 
Note that even if $Q(0) \equiv \E[\Qlm(\iy)]$, the time-dependent mean does not coincide with the steady-state mean. Moreover, $\E[\Qlm(t)]$ is not even a strictly increasing nor decreasing function of time. This phenomenon is a consequence of the decomposition of the transient mean into one strictly increasing, and a strictly decreasing term for $Q(0)>0$, as was studied in \cite{Abate1987}.
Nonetheless, $\Qlm(t)$ converges in distribution to $\Qlm(\infty)$ as $t\to\iy$, if $\mu>\l$. 
 
\begin{figure}
\centering
\begin{tikzpicture}[xscale=0.2,yscale=0.3]
\draw (0,0) -- coordinate (x axis mid) (50,0);
    \draw (0,0) -- coordinate (y axis mid) (0,21);
	%labels      
	\node[right] at (51,0) {$t$};
	\node[rotate=90, above=0.7 cm] at (y axis mid) {$\E[\Qlm(t)]$};
	
	\draw[dashed, thick, gray] (0,10) -- coordinate (eq) (51,10);
%	\node at (55,10) {$\E[Q_\mu(\infty)]$};
	
	\definecolor{col1}{rgb}{0.368417, 0.506779, 0.709798}
 	\definecolor{col2}{rgb}{0.880722, 0.611041, 0.142051}
 	\definecolor{col3}{rgb}{0.560181, 0.691569, 0.194885}	
 	\definecolor{col4}{rgb}{0.922526, 0.385626, 0.209179}
	
	%arrows
	\draw[->] (24,6.4) --  coordinate (a1) (21.65,8.49574);
	\node[right=0.6cm,below=0.3cm] at (a1) {$Q(0)\equiv 0$};
	\draw[->] (14,6.2) --  coordinate (a2) (13.,8.49434);
	\node[right=0.2cm,below=0.3cm] at (a2) {$Q(0)\equiv10$};
	\draw[->] (9,15.5) --  coordinate (a3) (7.5,13.7648);
	\node[right=0.6cm,above=0.1cm] at (a3) {$Q(0)\equiv20$};
	\draw[->] (40,12.5) -- coordinate (a4) (38,10.9712);
	\node[right,above=0.3cm] at (a4) { $Q(0)\sim \exp\left(\tfrac{1}{15}\right)$ };

	%ticks
    	\foreach \x in {0,10,...,50}
     		\draw (\x,1pt) -- (\x,-10pt)
			node[anchor=north] {\x};
    	\foreach \y in {5,10,15,20}
     		\draw (1pt,\y) -- (-20pt,\y)
     			node[anchor=east] {\y};

	%plots
	\draw[thick,color = col1] plot
			file { means0.txt};
	\draw[thick,color = col2] plot
			file { means10.txt};
    \draw[thick,color = col3] plot
			file { means20.txt};
	\draw[thick,color = col4] plot
			file { meansExp.txt};		
\end{tikzpicture}
\caption{Time-dependent mean workload in $M/M/1$ queue with $\l = 10$ and server speed $\mu=11$ for different initial states $Q(0)$. The dashed line depicts $\E\Qlm(\iy)$.}
\label{fig:transientmeans}
\end{figure}
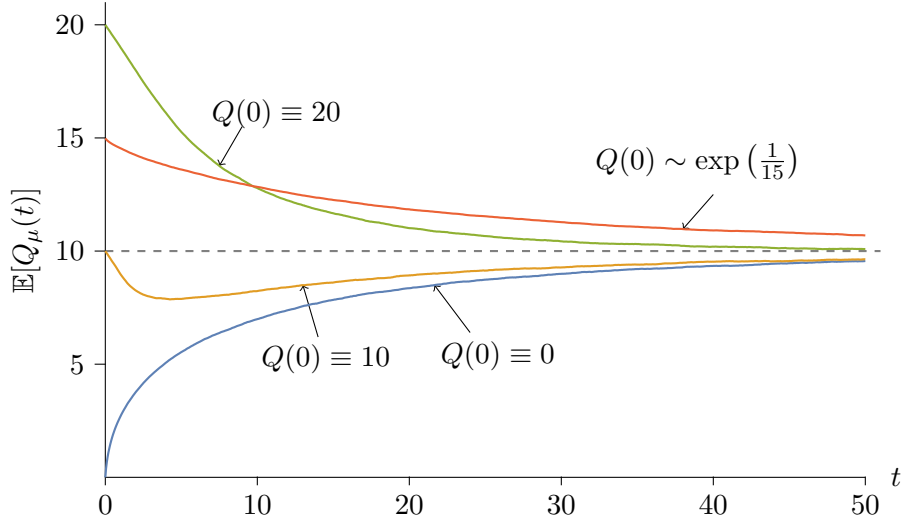
Since the time horizon of our analysis is limited to $t\leq T$, the process may not approach the steady-state distribution sufficiently close to appropriately use its steady-state properties for capacity allocation. 
To overcome this disparity, we propose a way to include the influence of this transient phase in the capacity allocation problem.
 
\subsection{Cost structure}

As mentioned before, we are interested in balancing the QoS and efficiency of the queue by choosing the optimal server speed $\mu$. 
The adjective \emph{optimal} indicates that we intend to choose the speed according to some objective function. 
In our case, we conduct our analysis based on a cost function, which consists of a part accounting for the penalty for congestion in the system and a part for staffing cost. The cost value of both parts is governed by the variable $\mu$. 
The instantaneous cost incurred at time $t$ equals
\begin{equation}
\E[\Qlm(t)] + \a \mu,
\end{equation}
where $\a$ is a positive constant defining the \emph{relative staffing cost}. 
Hence, the cost structure we apply is a combination of the transient mean of the workload process and a linear staffing cost. 
Accumulated and normalized over the period $[0,T]$, the cost function on which the rest of this paper will be based equals
\begin{equation}\label{eq:PiT}
\Pi_{T}(\mu) := \frac{1}{T}\int_0^T\left(  \E[\Qlm(t)] + \a\mu\, \right) \dd t 
 = \frac{1}{T} \int_0^T \E[\Qlm(t)]\, \dd t + \a\mu.
\end{equation}
We use shorthand notation for the normalized congestion costs:
\begin{equation}\label{eq:CTmu}
C_{T}(\mu) := \frac{1}{T}\int_0^T \E[Q_{\mu}(t)] \dd t,
\end{equation}
and $C_{\iy}(\mu) = \E[\Qlm(\iy)]$. 
In order to compare the actual costs incurred over the interval $[0,T]$ to the cost function of the queue in stationary conditions, we define
\begin{equation}\label{eq:PiInf}
\Pi_{\iy}(\mu) := C_{\iy}(\mu) + \a \mu = \E[Q_\mu(\iy)] + \a\mu,
\end{equation}
which allows an explicit expression by Lemma \ref{lemma:workloadmoments}. 
Also, note that by dominated convergence theorem
\begin{equation}
\lim_{T\to\iy} \Pi_{T}(\mu) = \Pi_{\iy}(\mu),
\end{equation} 
for all $\mu$.
Rewriting \eqref{eq:PiT} gives the relation
\begin{align}
\Pi_{T}(\mu) &= \frac{1}{T}\int_0^{T} \left( \E[\Qlm(t)] - \E[\Qlm(\iy)] \right)\, \dd t + \E[\Qlm(\iy)] + \a\mu =  \Omega_{T}(\mu) + \Pi_{\infty}(\mu).
\label{eq:decomp}
\end{align}
Section \ref{sec:analysis} is concerned with the analysis of the correction factor $\Omega_{T}(\mu)$.

Ultimately, we are concerned with the additional costs incurred by choosing the server speed through minimization of $\Pi_{\iy}(\mu)$ instead of $\Pi_{T}(\mu))$. 
Therefore, we formulate the exact and approximate optimization problems as follows
\begin{equation}\label{eq:muStar}
\mu_T^\star := \arg\min_{\mu\geq 0} \Pi_{T}(\mu), \qquad \qquad \mu_\infty^\star := \arg\min_{\mu\geq 0} \Pi_{\iy}(\mu),
\end{equation}
\begin{equation}\label{eq:piStar}
\Pi_{T}^\star = \Pi_{T}(\mu_T^\star), \qquad \qquad \Pi_{\iy}^\star = \Pi_{\iy}(\mu_\iy^\star).
\end{equation}
In Section \ref{sec:optimization} we turn to the comparison of $\mu_T^{\star}$ and $\mu_\iy^\star$ as well as the \emph{optimality gap} $\Pi_{\iy}^\star - \Pi_{T}^\star$.
For sake of clarity, we omit the subscript $\l$ in our expressions if no ambiguity is possible.

\section{Analysis of the objective function \label{sec:analysis}}
From \eqref{eq:decomp} it is evident that, for finding an explicit characterization of $\Pi_{T}(\mu)$, it suffices to study the term $\Omega_T(\mu)$ in more detail. We start by stating the main result of this section, which describes the leading order behavior of $\Omega_T(\mu)$ as $T$ increases.
\begin{thm}
Let $X_\mu(t)$ be of the form \eqref{eq:Xlmprocess}. If $\E[Q(0)^2], \E[Q(0)^3] < \iy$ and $u_2,u_3 < \iy$, then
\begin{equation}
\Omega_T(\mu) = \frac{1}{2T(\mu-\l)}\left( \E[Q(0)^2] - \frac{\l^2 u_2^2}{2(\mu-\l)^2} - \frac{\l u_3}{3(\mu-\l)}\right) + O\left(\frac{1}{T^2}\right),
\end{equation} 
for $\mu>\l$.
\end{thm}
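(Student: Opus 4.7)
The approach is to reduce $\Omega_T(\m)$ to an exact closed-form expression involving $\E[Q_\m(T)^2]$ via a Kella--Whitt martingale argument, and then to control the rate at which $\E[Q_\m(T)^2]$ approaches its stationary value.

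First, I would apply the Kella--Whitt martingale associated with the reflected L\'evy process $Q_\m$: the process
\[
\k_\m(\th)\int_0^t e^{\th Q_\m(s)}\,\dd s + e^{\th Q(0)} - e^{\th Q_\m(t)} + \th\,\ell(t)
\]
has mean zero, where $\ell$ denotes the continuous non-decreasing boundary term pushing $Q_\m$ up at $0$. Taking expectations, expanding in powers of $\th$ via
\[
\k_\m(\th) = -(\m-\l)\th + \frac{\l u_2}{2}\th^2 + \frac{\l u_3}{6}\th^3 + O(\th^4),
\]
and matching the coefficient of $\th^2$ yields the exact identity
\[
\int_0^T \E[Q_\m(s)]\,\dd s = \E[Q_\m(\iy)]\,T + \frac{\E[Q(0)^2] - \E[Q_\m(T)^2]}{2(\m-\l)},
\]
which can also be derived directly by applying It\^o's formula to $Q_\m(t)^2$. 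Dividing by $T$ and subtracting $\E[Q_\m(\iy)]$ gives the compact form $\Omega_T(\m) = (\E[Q(0)^2] - \E[Q_\m(T)^2])/(2T(\m-\l))$. Since Lemma~\ref{lemma:workloadmoments} identifies the constant appearing in the theorem as $\E[Q_\m(\iy)^2]$, the assertion reduces to establishing
\[
\E[Q_\m(T)^2] - \E[Q_\m(\iy)^2] = O(1/T).
\]

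Second, to obtain this rate, I would push the same Kella--Whitt expansion one order further. Matching the coefficient of $\th^3$ and substituting the previous $\th^2$ identity yields
\[
\int_0^T \bigl(\E[Q_\m(s)^2] - \E[Q_\m(\iy)^2]\bigr)\,\dd s = \frac{\l u_2(\E[Q(0)^2] - \E[Q_\m(T)^2])}{2(\m-\l)^2} + \frac{\E[Q(0)^3] - \E[Q_\m(T)^3]}{3(\m-\l)}.
\]
Under the hypotheses $u_2, u_3 < \iy$ and $\E[Q(0)^k] < \iy$ for $k\leq 3$, a standard pathwise domination argument (coupling $Q_\m$ to a workload process in heavier traffic or to its stationary version) gives uniform boundedness of $\E[Q_\m(T)^2]$ and $\E[Q_\m(T)^3]$ in $T$, so the right-hand side is $O(1)$ uniformly in $T$.

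The main obstacle is converting this integrated $O(1)$ bound into the pointwise $O(1/T)$ rate needed above, since integrability alone does not force pointwise decay. This last step calls for further input from the fluctuation theory of spectrally positive L\'evy processes: either an Abate--Whitt style decomposition of $\E[Q_\m(t)^2]$ into a sum of monotone components each of which inherits the rate from the integrated bound, or a Tauberian argument applied to the Laplace transform of $T\mapsto \E[Q_\m(T)^2]$, which itself is accessible via the Wiener--Hopf factorization of the net-input process $X_\m$. Once the $O(1/T)$ rate is secured, substituting back into the exact expression for $\Omega_T(\m)$ and identifying the remainder as $O(1/T^2)$ completes the proof.
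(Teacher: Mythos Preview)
Your approach is genuinely different from the paper's and, in its first half, arguably cleaner. The paper never writes down the exact identity
\[
\Omega_T(\mu)=\frac{\E[Q(0)^2]-\E[Q_\mu(T)^2]}{2T(\mu-\l)};
\]
instead it couples two workloads $Q^x$ and $Q^y$ through a common net-input process, studies the difference process $Y^{x,y}(t)=Q^x(t)-Q^y(t)$, and computes $\int_0^\infty \E[Y^{x,y}(t)]\,\dd t$ via the first-passage subordinator $\tau^x(\cdot)$. Randomising $y$ over the stationary law and bounding the tail $\int_T^\infty$ via Markov's inequality on $\tau^x(0)^2$ yields the $O(1/T^2)$ remainder. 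Your Kella--Whitt route delivers the leading term in one line and makes the structure of the error transparent; the paper's route, by contrast, is entirely probabilistic and avoids any martingale machinery.

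The gap you flag is real, and your proposed remedies are heavier than necessary. Boundedness of $\int_0^T(\E[Q_\mu(s)^2]-\E[Q_\mu(\iy)^2])\,\dd s$ does not by itself force the integrand to be $O(1/T)$, and neither an Abate--Whitt monotone decomposition nor a Tauberian argument is obviously available here without additional work (the function $t\mapsto\E[Q_\mu^x(t)^2]-\E[Q_\mu(\iy)^2]$ is not monotone in general, and you would first need the relevant transform in a usable form). The quickest way to close the gap is, ironically, the paper's own device: under the common-input coupling with $x>y$,
\[
(Q^x(t))^2-(Q^y(t))^2 = Y^{x,y}(t)\bigl(Q^x(t)+Q^y(t)\bigr)
\]
vanishes for $t\geq\tau^x(0)$ and is bounded by $2x\,Q^x(t)\,\mathbf{1}_{\{\tau^x(0)>t\}}$. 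Cauchy--Schwarz together with the uniform second-moment bound you already invoked and $\P(\tau^x(0)>t)\leq \E[\tau^x(0)^2]/t^2$ then gives $|\E[Q^x(t)^2]-\E[Q^y(t)^2]|=O(1/t)$ directly, with constants depending only on $x$, $u_2$ and $\mu-\l$. Randomising over $x\sim Q(0)$ and $y\sim Q_\mu(\iy)$ (this is where the third-moment hypotheses enter, to keep the constants integrable) finishes the argument. So your programme works, but its last step is most naturally completed by importing the coupling/first-passage estimate that drives the paper's proof.
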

Note that this expression provides an \emph{approximation} of the actual cost function $\Pi_T(\mu)$. We elaborate on the implications of this additional information on the optimization problem in Section 4. 

In the remainder of this section we provide a detailed description of the steps taken to obtain this outcome. Proofs of the intermediate results can be found in Appendix A.
\subsection{Constructing a coupling}

Before starting our analysis with the correction term $\Omega_{T}(\mu)$ we introduce some auxiliary notation. 
By $Q_\mu^A(t)$ we denote the workload process as described in Subsection \ref{sec:levymodel} with $Q(0)\ed A$ and $\E_A$ the expectation with respect to the random variable $A$. 
To be able to compare $\E[\Qm^Z(t)]$ and $\E[Q_\mu(\iy)]$ as in $\Omega_T(\mu)$, we will use a coupling technique.
For brevity, denote by $Z$ a random variable for which $Z\ed \Qm(\iy)$. Then $\Qm(\iy) \ed \Qm^{Z}(t)$ for all $t \geq 0$ and $\E[\Qm(\iy)] = \E_Z[\Qm^{Z}(t)]$. 
Hence, quantifying the difference between the transient and stationary mean is equivalent to comparing the workload processes of two queues starting in two different (random) states at $t=0$. 
For now, assume $Q(0)\equiv x \geq 0$. Later, we relax this by replacing $x$ by the random variable $Q(0)$. In this subsection, we will omit the subscript $\mu$ for brevity.

Equation \eqref{eq:Qlm} shows that all randomness in $Q$ originates from the process $X(t)$. 
With this in mind, we couple the processes $Q^x(t)$ and $Q^{Z}(t)$ on a sample path level by feeding both queues the same net-input process $X(t)$ for $t\geq 0$. 
This allows us to compare the processes in the same probability space, 
\begin{align}
\E[Q^x(t)] - \E[Q(\infty)] &= \E_{X}[Q^x(t)] - \E_{X}\hspace{-4pt}\left[\E_Z[Q^{Z}(t)]\right]\nonumber\\
 &= \E_{Z}\left[\E_{X}\hspace{-4pt}\left[ Q^x(t) - Q^{Z}(t)\right]\right].
\end{align}
For brevity, we also replace $\Qm(\iy)$ by the variable $y$. At the end of our analysis we will obtain the original form by randomization. 
Define
\begin{equation}
Y^{x,y}(t) := Q^x(t) - Q^y(t).
\end{equation}
Then 
\begin{equation}
\Omega_{T}^{x,y} := \frac{1}{T}\,\int_0^T \E\left[Y^{x,0}(t)\right] \, \dd t
\end{equation}
and
\begin{equation}
\Omega_{T} = \E_{Q(\iy)}\left[ \Omega_T^{x,\Qm(\iy)} (\mu) \right].
\end{equation}
A possible sample path triple for $Q^x(t)$, $Q^0(t)$ and $Y^{x,y}(t)$ is depicted in Figure \ref{fig:samplePaths}. As we see from this figure, $Y^{x,y}(t)$ has nice structural properties which we will exploit in the next subsection. 

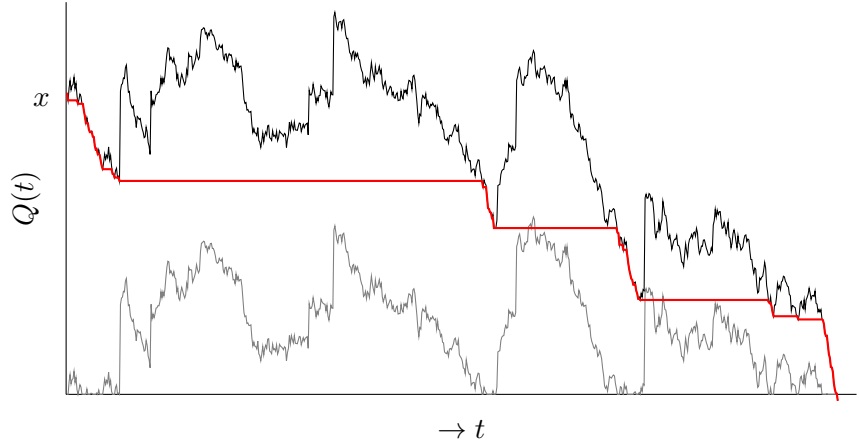
\begin{figure}
\centering
\begin{tikzpicture}[y=0.8cm, x=0.013cm]
 	%axis
	\draw (0,0) -- coordinate (x axis mid) (800,0);
    \draw (0,0) -- coordinate (y axis mid) (0,6.5);
	%labels      
	\node[below=0.2cm] at (x axis mid) {$\to t$};
	\node[rotate=90, above=0.2cm] at (y axis mid) {$Q(t)$};
	\node[above=1.3cm,left =0.08 cm] at (y axis mid) {$x$};

	%plots
	\draw plot
			file { samplePathLevy.txt};
	\draw[color = gray] plot
			file { samplePathLevy2.txt};
	\draw[thick,color=red] plot
			file { runningMinimumLevy.txt};
\end{tikzpicture}
\caption{Sample path visualization of the processes $Q^x(t)$ (solid), $Q^0(t)$ (gray) and $Y^{x,0}(t)$ (red).}
\label{fig:samplePaths}
\end{figure}

\subsection{Difference process and leading order behavior of the correction term}

We further examine the \emph{difference process} $Y^{x,y}(t)$. Let us assume that $x>y$. Recall from \eqref{eq:Qlm},
\begin{equation}\label{eq:Wz}
Q^z(t) = \max\{ z + X(t),\, \sup_{0<s\leq t} [X(t)-X(s)]\} = X(t) + \max\{ z, -\inf_{0\leq s\leq t} X(s)\},
\end{equation}
where $X(t)$ is a L\'evy process with no negative jumps.
Let $\tau^x(z)$, $z<x$ denote the first passage time of level $z$ by the process $Q^x$, i.e.
\begin{equation}
\tau^x(z) := \inf \left\{ t \geq 0\, |\, Q^x(t) < z \,\right\}.
\end{equation}
Then it is easily seen that
\begin{equation}
Q^z(t) = \left\{
\begin{array}{ll}
z + X(t), & {\rm if }\ t <\tau^z(0), \\
\sup_{0<s\leq t} [X(t)-X(s)], & {\rm if }\ t \geq \tau^z(0).
\end{array}\right.
\end{equation}
Consequently,
\begin{equation}\label{eq:Yxy}
Y^{x,y}(t) = \left\{
\begin{array}{ll}
x - y, & \text{if }t < \tau^y(0),\\
\inf_{0<s\leq t} \{ x+X(s)\}, & \text{if }\tau^y(0) \leq t < \tau^x(0),\\
0, & \text{if }\tau^x(0) \leq t.
\end{array}\right.
\end{equation}
Using this representation we can identify
\begin{equation}
\Omega^{x,y}_T = \frac{1}{T}\,\E\left[\int_0^{\tau^x(0)\wedge T} Y^{x,y}(t) \dd t\right],
\end{equation}
where $\wedge$ denotes the minimum operator, due to the fact $Y^{x,y}(t) = 0$ for $t\geq \tau^x(0)$. 
Subsequently, we decompose $\Omega_T^{x,y}$ into two terms 
\begin{equation}
\Psi^{x,y}_T := \frac{1}{T} \int_0^\infty \E[Y^{x,y}(t)]\, \dd t  \qquad
\text{and}
\qquad
\Delta_T^{x,y} := \Omega_T^{x,y} - \Psi_T^{x,y}.
\label{eq:Deltaxy}
\end{equation}
Note that $\Psi_T^{x,y}$ is obtained by replacing $T$ by $\infty$ only in the integration bound.
This decomposition is insightful, because $\Psi_T^{x,y}$ prescribes the leading order behavior of $\Omega_T^{x,y}$, while $\Delta_T^{x,y}$ captures the smaller order error term. 
In this section, we only consider $\Psi_T^{x,y}$. Subsection \ref{sec:trunc} investigates the magnitude of $\Delta_T^{x,y}$.
The next preliminary result presents a useful property of $\Psi_T^{x,y}$.
\begin{lemma}
Let $x>y$. If $\E[\tau^x(0)]<\iy$, then
\begin{equation}\label{eq:H(x,y)}
\Psi^{x,y}_T = \frac{1}{T}\,\E[\tau^{y}(0)](x-y) + \Psi^{x-y,0}_T.
\end{equation}
\end{lemma}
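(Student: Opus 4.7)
The plan is to exploit the three-piece representation of $Y^{x,y}$ in \eqref{eq:Yxy} together with the strong Markov property of $X$ at the hitting time $\tau^y(0)$. Since $Y^{x,y}(t)\equiv 0$ for $t\geq\tau^x(0)$, I split
\begin{equation}
\int_0^\iy Y^{x,y}(t)\,\dd t = (x-y)\tau^y(0) + \int_{\tau^y(0)}^{\tau^x(0)} \inf_{0<s\leq t}\{x+X(s)\}\,\dd t.
\end{equation}
Taking expectations of the first term produces the $(x-y)\E[\tau^y(0)]/T$ contribution after dividing by $T$; its finiteness is immediate from $\tau^y(0)\leq\tau^x(0)$ (since $-y>-x$) and the hypothesis $\E[\tau^x(0)]<\iy$.

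The main step is to recognise the middle integral as $\int_0^\iy Y^{x-y,0}(u)\,\dd u$ built from an independent copy of $X$. Because $X$ is spectrally positive it crosses the level $-y$ continuously, so $X(\tau^y(0))=-y$ and $\inf_{0<s\leq\tau^y(0)}X(s)=-y$. Setting $\tilde X(u):=X(\tau^y(0)+u)+y$, the strong Markov property gives $\tilde X\ed X$ and independence of $\mathcal F_{\tau^y(0)}$. For $t=\tau^y(0)+u$ with $u\in[0,\tau^x(0)-\tau^y(0))$, splitting the infimum across $\tau^y(0)$ yields
\begin{equation}
\inf_{0<s\leq t}\{x+X(s)\}=(x-y)+\min\Big\{0,\,\inf_{0<v\leq u}\tilde X(v)\Big\},
\end{equation}
and $\tau^x(0)-\tau^y(0)$ equals the analogous first-passage time $\tilde\tau^{x-y}(0)$ of $\tilde X$ below $-(x-y)$. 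Comparing with \eqref{eq:Yxy} applied to the pair $(x-y,0)$, and using the convention that $\tau^0(0)=0$ so that only the middle case contributes, the right-hand side above is exactly $Y^{x-y,0}(u)$ constructed from $\tilde X$. Since $Y^{x-y,0}\equiv 0$ past $\tilde\tau^{x-y}(0)$, the middle integral equals $\int_0^\iy Y^{x-y,0}(u)\,\dd u$.

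Taking expectations and using $\tilde X\ed X$ to replace the $\tilde X$-driven integral by its $X$-driven analogue, then dividing by $T$, delivers \eqref{eq:H(x,y)}. I expect the only genuine obstacle to be the displayed splitting of the infimum: one has to track the minimum of $x+X(s)$ across the regeneration time $\tau^y(0)$ and match it cleanly with the three-piece formula for $Y^{x-y,0}$, where the mild subtlety is the interpretation of $\tau^0(0)=0$. The remaining ingredients, namely the strong Markov property, creeping of spectrally positive $X$ at $-y$, and the bound $Y^{x,y}\leq x-y$ (which legitimises Fubini via $\E[(x-y)\tau^x(0)]<\iy$), are standard.
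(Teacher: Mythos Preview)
Your proof is correct and follows essentially the same route as the paper: split the integral of $Y^{x,y}$ at $\tau^y(0)$, evaluate the first piece as $(x-y)\tau^y(0)$, and identify the post-$\tau^y(0)$ integral with $\Psi^{x-y,0}_T$ via the strong Markov property. You supply more detail than the paper does---in particular the creeping argument $X(\tau^y(0))=-y$, the explicit shifted process $\tilde X$, and the Fubini justification---but the structure and ideas are the same.
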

This leaves us with two unknowns $\E[\t^y(0)]$ and $\Psi_T^{x-y,0}$.
The next lemma gives an equivalent form for the latter.
\begin{lemma}
For $z\geq 0$ and $\E[\tau^z(0)] < \iy$,
\begin{equation}\label{eq:H(x,0)}
\Psi^{z,0}_T = \int_0^z \E[\tau^y(0)]\, \dd y.
\end{equation}
\end{lemma}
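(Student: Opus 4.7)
The plan is to combine the pathwise representation in \eqref{eq:Yxy} with a layer-cake decomposition of $z - \bar X(t)$ in space, converting the time integral of $Y^{z,0}$ into a spatial integral of first-passage times. Setting $y = 0$ in \eqref{eq:Yxy} and noting $\tau^0(0) = 0$ gives
\begin{equation*}
Y^{z,0}(t) = \begin{cases} z - \bar X(t), & 0 \leq t < \tau^z(0),\\ 0, & t \geq \tau^z(0), \end{cases}
\end{equation*}
where $\bar X(t) := -\inf_{0 < s \leq t} X(s)$ is the negated running infimum of the driving L\'evy process. Since $X$ is spectrally positive, $\bar X$ is continuous and nondecreasing, and for every $y\geq 0$ one has $\tau^y(0) = \inf\{t \geq 0 : \bar X(t) \geq y\}$.

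The key step is the identity $z - \bar X(t) = \int_0^z \mathbf{1}\{\bar X(t) < y\}\,\dd y$, which is valid on $\{t < \tau^z(0)\}$ because there $0 \leq \bar X(t) < z$. Inserting this and swapping the order of integration (legitimate by Fubini for nonnegative integrands) gives
\begin{equation*}
\int_0^\infty Y^{z,0}(t)\,\dd t = \int_0^{\tau^z(0)}\bigl(z - \bar X(t)\bigr)\,\dd t = \int_0^z \int_0^{\tau^z(0)} \mathbf{1}\{\bar X(t) < y\}\,\dd t\,\dd y.
\end{equation*}
Continuity and monotonicity of $\bar X$ identify $\{\bar X(t) < y\}$ with $\{t < \tau^y(0)\}$; combined with $\tau^y(0) \leq \tau^z(0)$ for $0\leq y \leq z$, this reduces the inner integral to $\tau^y(0)$. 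Taking expectations and applying Fubini once more, justified by $0 \leq \tau^y(0) \leq \tau^z(0)$ and the hypothesis $\E[\tau^z(0)] < \infty$, produces $\int_0^z \E[\tau^y(0)]\,\dd y$, from which the claim for $\Psi^{z,0}_T$ follows by the $1/T$ prefactor in \eqref{eq:Deltaxy}.

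The main point is not calculational but structural: it is the dual use of spectral positivity. The absence of negative jumps makes $\bar X$ continuous, so $\tau^y(0)$ is a genuine hitting time of $y$ with $\bar X(\tau^y(0)) = y$; and it ensures that the Lebesgue measure of $\{t \in [0,\tau^z(0)) : \bar X(t) < y\}$ equals exactly $\tau^y(0)$, with no overshoot correction. Were negative jumps allowed, $\bar X$ could skip past level $y$ at $\tau^y(0)$ and both identifications underlying the Fubini reduction would fail.
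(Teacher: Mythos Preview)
Your proof is correct and follows essentially the same route as the paper. The paper's argument observes that $\tau^z(\cdot)$ is a right inverse of $Y^{z,0}(\cdot)$ and invokes the area identity $\int_0^{\tau^z(0)} Y^{z,0}(t)\,\dd t = \int_0^z \tau^z(y)\,\dd y$ directly; you unfold that same identity via the layer-cake representation and Fubini, which is just the explicit mechanism behind the right-inverse area swap.
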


As the term $\E[\tau^y(0)]$ appears in many of the preliminary results, we devote attention to this in the next subsection.

\subsubsection*{First passage time}
When studying the first passage time $\tau^x(y)$ of the workload process starting $x$, we first observe that $\{\tau^x(z-y)\}_{y=0}^x$ is a spectrally positive L\'evy process itself. 
More precisely, it is a subordinator, i.e. a L\'evy process whose paths are almost surely non-decreasing \cite{Kyprianou2006}. 
In order to calculate $\E[\tau^x(x-y)]$ we use theory presented in \cite[Section 46]{Sato1999}, although results presented there are valid for spectrally \emph{negative} L\'evy processes, as opposed to the absence of negative jumps in our case. 
Nonetheless, our setting is easily transformed into this framework by observing that $\hat{X} \equiv -X$, that is $\hat{X}(t) = -X(t)$ for all $t\geq 0$, is spectrally negative. 
Furthermore, let
\begin{equation}
\label{eq:transformedTau}
\hat{\tau}^0(y) := \inf\{ t \geq 0\,:\, \hat{X}(t) > y\} =  \inf\{ t \geq 0\,:\, x+X(t)< x-y\} = \tau^x(x-y).
\end{equation}
For completeness, we cite \cite[Thm~46.3]{Sato1999}.
\begin{thm}
Let $\hat{X}(t)$ be a spectrally negative L\'evy process with generating triplet $(-a,\s,\hat{\nu})$ and $\hat{\tau}^0(y)$ its corresponding hitting time process. Define $\Upsilon(\th)$ for $\th\geq 0$ as
\begin{equation}\label{eq:thmCharExp}
\Upsilon(\th) = -a\th + \tfrac{1}{2}\s^2\th^2 + \int_{-\infty}^0 (e^{\th x}-1-\th x{\bf 1}_{[-1,0)}(x))\, \hat{\nu}(\dd x).
\end{equation}
Then $\Upsilon(\th)$ is strictly increasing and continuous, $\Upsilon(0)=0$, and $\Upsilon(\th)\to\infty$ as $\th\to\infty$. For $x\geq 0$ and $0\leq u < \infty$ we have
\begin{equation}\label{eq:invCharExp}
\E[\exp(-u\hat{\tau}^0(y))] = \exp(-y\,\Upsilon^{-1}(u)),
\end{equation}
where $\th=\Upsilon^{-1}(u)$ is the inverse function of $u=\Upsilon(\th)$. 
\end{thm}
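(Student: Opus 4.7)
Since this is cited verbatim from Sato, I will sketch the classical path that establishes it. The strategy has three ingredients: (i) the analytic properties of $\Upsilon$, (ii) the fact that $\{\hat\tau^0(y)\}_{y\ge 0}$ is itself a subordinator, and (iii) an exponential-martingale/optional-stopping argument that identifies its Laplace exponent with $\Upsilon^{-1}$. The crux throughout is that $\hat X$ is spectrally negative, which forces $\hat X(\hat\tau^0(y)) = y$ on $\{\hat\tau^0(y)<\infty\}$: the process crosses every level continuously from below, with no overshoot.

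For the first ingredient, because all jumps of $\hat X$ are negative, $\E[\exp(\th\hat X(1))]<\infty$ for every $\th\ge 0$, so $\Upsilon(\th) = \log\E[\exp(\th\hat X(1))]$ is a smooth, convex function on $[0,\infty)$ with $\Upsilon(0)=0$. Convexity combined with $\Upsilon'(0)=\E[\hat X(1)]>0$ (which is the standing assumption $\mu>\l$ in our setting) gives strict monotonicity; $\Upsilon(\th)\to\infty$ follows from the superlinear $\tfrac12\s^2\th^2$ or jump-integral contributions (and in any case from convexity once one has one value $\Upsilon(\th_0)>0$). Hence $\Upsilon^{-1}:[0,\infty)\to[0,\infty)$ is well-defined, continuous and strictly increasing.

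For the second ingredient, apply the strong Markov property of $\hat X$ at the stopping time $\hat\tau^0(y)$: the post-$\hat\tau^0(y)$ process is an independent copy of $\hat X$ started at $\hat X(\hat\tau^0(y)) = y$, so $\hat\tau^0(y+y')-\hat\tau^0(y)$ equals the first hitting time of $y'$ by this independent copy, and is therefore independent of $\mathcal{F}_{\hat\tau^0(y)}$ with the same law as $\hat\tau^0(y')$. This proves $\{\hat\tau^0(y)\}_{y\ge 0}$ has independent, stationary, nonnegative increments, i.e.\ is a subordinator, and therefore $\E[\exp(-u\hat\tau^0(y))] = \exp(-y\Phi(u))$ for some Laplace exponent $\Phi:[0,\infty)\to[0,\infty)$.

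To identify $\Phi = \Upsilon^{-1}$, fix $\th\ge 0$ and use the exponential martingale $M^\th_t := \exp(\th\hat X(t) - t\,\Upsilon(\th))$. Apply optional stopping at $\hat\tau^0(y)\wedge n$ and let $n\to\infty$. On $\{t<\hat\tau^0(y)\}$ one has $\hat X(t)\le y$, so $M^\th_{t\wedge\hat\tau^0(y)}\le e^{\th y}$ (using $\Upsilon(\th)\ge 0$), and dominated convergence yields $\E[M^\th_{\hat\tau^0(y)}]=1$. Inserting the no-overshoot identity $\hat X(\hat\tau^0(y)) = y$ gives
\begin{equation}
1 \;=\; \E\!\left[e^{\th\hat X(\hat\tau^0(y)) - \hat\tau^0(y)\,\Upsilon(\th)}\right] \;=\; e^{\th y}\,\E\!\left[e^{-\hat\tau^0(y)\,\Upsilon(\th)}\right],
\end{equation}
and setting $u=\Upsilon(\th)$, hence $\th=\Upsilon^{-1}(u)$, delivers \eqref{eq:invCharExp}. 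The main obstacle is the optional-stopping passage at an unbounded stopping time; its cleanness is entirely due to the absence of positive jumps, because any overshoot above $y$ would leave an unwanted factor $\E[\exp(\th[\hat X(\hat\tau^0(y))-y])\cdots]$ that could not be pulled out of the expectation and the tidy inversion $\Phi=\Upsilon^{-1}$ would collapse.
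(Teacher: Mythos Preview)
The paper does not supply a proof of this theorem; it is quoted verbatim from \cite[Thm.~46.3]{Sato1999} and used as a black box. Your sketch is precisely the standard exponential-martingale argument and is correct in the paper's setting, where the standing hypothesis $\mu>\lambda$ guarantees $\Upsilon'(0)=\E[\hat X(1)]>0$; this is what gives you both strict monotonicity of $\Upsilon$ on $[0,\infty)$ and the nonnegativity $\Upsilon(\th)\ge 0$ that you invoke for the dominated-convergence bound $M^{\th}_{t\wedge\hat\tau^0(y)}\le e^{\th y}$. (Without the positive-drift assumption the statement that $\Upsilon$ is strictly increasing on all of $[0,\infty)$ would fail, and one instead defines $\Upsilon^{-1}$ as the right inverse; you already flag this dependence, so no change is needed for the purposes of the paper.)
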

This immediately induces an expression for $\Psi^{z,0}$.
\begin{cor}\label{cor:Psixy}
Let $X(t)$ be a spectrally positive L\'evy process defined as in \eqref{eq:Xlmprocess} with $\mu > \l$. Let $\Psi^{z,0}_T$ as in \eqref{eq:H(x,0)}. Then 
\begin{equation}
\Psi^{z,0}_T = \frac{z^2}{2T(\mu-\l)}.
\end{equation}
Furthermore, if $x,y\geq 0$,
\begin{equation}\label{eq:mainResult}
\Psi^{x,y}_T = \frac{x^2-y^2}{2T(\mu-\l)}.
\end{equation}
\end{cor}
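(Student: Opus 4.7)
The plan is to reduce the corollary to a single computation, namely the mean first passage time $\E[\tau^y(0)]$, and then assemble the two displayed formulas from the two preceding lemmas.

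\textbf{Step 1: compute $\E[\tau^y(0)]$.} I would invoke the theorem cited from \cite{Sato1999} applied to the spectrally negative process $\hat{X}=-X$. Via the identification $\hat{\tau}^0(y)=\tau^y(0)$ recorded in \eqref{eq:transformedTau} (taking $x=y$, or equivalently using spatial homogeneity of $X$), formula \eqref{eq:invCharExp} gives
\begin{equation}
\E\bigl[\exp(-u\tau^y(0))\bigr]=\exp\bigl(-y\,\Upsilon^{-1}(u)\bigr),\qquad u\ge 0.
\end{equation}
Differentiating at $u=0^+$ (monotone convergence legitimates the interchange since $\tau^y(0)\ge 0$) yields $\E[\tau^y(0)]=y\,(\Upsilon^{-1})'(0^+)$. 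By the inverse function theorem and $\Upsilon(0)=0$, this equals $y/\Upsilon'(0^+)$. Finally, using the definition \eqref{eq:thmCharExp} of the Laplace exponent together with $\hat X(1)=-X(1)=-(U(\lambda)-\mu)$ and $\E[U(1)]=1$, one checks that
\begin{equation}
\Upsilon'(0^+)=\E[\hat{X}(1)]=\mu-\lambda,
\end{equation}
which is strictly positive by assumption, so in particular $\E[\tau^y(0)]=y/(\mu-\lambda)<\infty$. This is the only place where real work happens, and it is also the place I expect a careful referee to look hardest: one must justify both the derivative interchange and the finiteness of $\Upsilon'(0^+)$ in terms of the generating triplet (using $u_2<\infty$, which ensures $\E|X(1)|<\infty$).

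\textbf{Step 2: obtain $\Psi^{z,0}_T$.} Plugging $\E[\tau^y(0)]=y/(\mu-\lambda)$ into \eqref{eq:H(x,0)} gives
\begin{equation}
\Psi^{z,0}_T=\frac{1}{T}\int_0^z \frac{y}{\mu-\lambda}\,\dd y=\frac{z^2}{2T(\mu-\lambda)},
\end{equation}
which is the first claim. (Note that the finiteness hypothesis $\E[\tau^z(0)]<\infty$ required by the preceding lemma is automatic from Step 1 under $\mu>\lambda$.)

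\textbf{Step 3: obtain $\Psi^{x,y}_T$ for general $x,y\ge 0$.} For $x>y$, substitute the Step 1 and Step 2 outputs into identity \eqref{eq:H(x,y)}:
\begin{equation}
\Psi^{x,y}_T=\frac{1}{T}\cdot\frac{y}{\mu-\lambda}(x-y)+\frac{(x-y)^2}{2T(\mu-\lambda)}=\frac{(x-y)(2y+(x-y))}{2T(\mu-\lambda)}=\frac{x^2-y^2}{2T(\mu-\lambda)}.
\end{equation}
The case $x<y$ follows by symmetry, since $Y^{x,y}(t)=-Y^{y,x}(t)$ on the coupling, which makes $\Psi^{x,y}_T$ antisymmetric in $(x,y)$ and matches the antisymmetry of $x^2-y^2$; the case $x=y$ is trivial as $Y^{x,x}\equiv 0$. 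This establishes \eqref{eq:mainResult} and completes the proof.
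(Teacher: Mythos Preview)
Your proposal is correct and follows essentially the same route as the paper: differentiate the Laplace transform \eqref{eq:invCharExp} at $u=0$ to obtain $\E[\tau^y(0)]=y/(\mu-\lambda)$, integrate via \eqref{eq:H(x,0)} to get $\Psi^{z,0}_T$, and then assemble the general formula from \eqref{eq:H(x,y)}. Your treatment is in fact slightly more complete than the paper's, which tacitly restricts to $x>y$ (the range where Lemma~2 is stated) and contains a couple of sign typos in the intermediate display; your explicit antisymmetry argument for $x<y$ and the remark that $\E[\tau^z(0)]<\infty$ is automatic under $\mu>\lambda$ are welcome additions.
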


\subsubsection*{Randomization}
As we stated before, we easily obtain the original $\Omega_T$ from $\Omega_T^{x,y}$ through substitution of $x$ and $y$ by $Q(0)$ and $Q(\iy)$, respectively, and taking the expectation. 
In the previous paragraph, we deduced an explicit expression for $\Psi_T^{x,y}$, the leading order term for $\Omega_T^{x,y}$. 
Therefore we equivalently get an approximation for $\Omega_T$, given by
\begin{equation}
\Psi_T := \frac{1}{T} \int_0^\iy \left( \E[Q(t)]-\E[Q(\iy)] \right)\, \dd t,
\end{equation}
through randomization of $x$ and $y$ in $\Psi_T^{x,y}$.
By combining the results in Corollary \ref{cor:Psixy} and Lemma \ref{lemma:workloadmoments} we directly prove the result in Theorem 1.

\subsection{Truncation error}\label{sec:trunc}

In order to get a better comprehension of the properties of $\Psi_T$, we depict the value in terms of the (infinite) region between the curves $\E[Q(t)]$, $\E[Q(\iy)]$ and the vertical axis for the case $Q(0)\equiv 0$ in Figure \ref{fig:PsiVisualization}.
In this figure, $\Omega_T$ is given by the area enclosed by the two curves, the vertical axis and the line $t=T$. 
One can see that the main contribution to the correction term $\Omega_T$ is given for small $t$. 
As $t$ increases, the difference between transient and stationary mean decreases.
Hence for moderate values of $T$, the contribution to the integral in \eqref{eq:Deltaxy} is only minor compared to the contribution over the interval $[0,T]$. 

\begin{figure}
\centering
\begin{tikzpicture}[xscale=0.13,yscale=0.3]
	%labels      
	\node[below=0.4cm,right=0.5cm] at (x axis mid) {$\to t$};
	
	\draw[dashed, thick, fill =gray!30] (0,0) rectangle coordinate (eq) (50,10);
	\node[] at (-7,10) {$\E[Q(\infty)]$};
	\node[] at (-3,0) {$0$};

	%arrows
	\draw[->] (18,6.4) coordinate (a1) --   (21.65,8.49574);
	\node[below] at (a1) {$x=0$}; 

	%ticks
    	\foreach \x in {30}
     		\draw (\x,1pt) -- (\x,-10pt)
			node[anchor=north] {$T$};
    	\foreach \y in {10}
     		\draw (1pt,\y) -- (-20pt,\y);

	%plots
	\draw[thick,color = gray,fill=white] plot
			file {means0_2.txt};
		
	\draw[thick] (0,0) -- coordinate (x axis mid) (50,0);
    \draw[thick] (0,0) -- coordinate (y axis mid) (0,12);		
    \draw[color=white,very thick] (50,0.05) -- (50,9.56);
    
    \draw[very thick, dotted] (30,0) -- (30,10);
    
    \draw[->] (18,6.8) coordinate (delta) -- (17,8.7);
    \node[below] at (delta) {$\Psi_{T}$};
    
    \draw[->] (38,8.1) coordinate (delta) -- (36,9.7);
    \node[below] at (delta) {$\Delta_{T}$};
\end{tikzpicture}
\caption{Visualization of $\Omega_T$ and $\Psi_T$ as the area between the curves $E[Q(t)]$, $\E[Q(\iy)]$ for $Q(0) = 0$.}
\label{fig:PsiVisualization}
\end{figure}
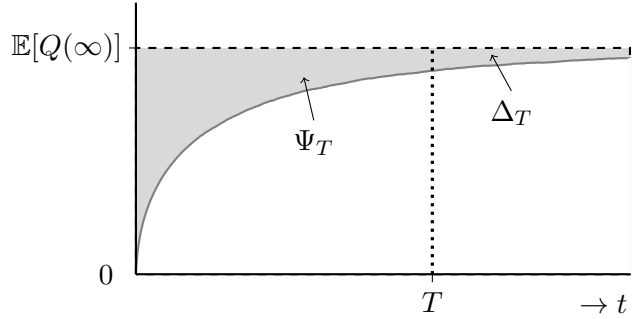

Recall the definition of $\Delta^{x,y}_T$ as in \eqref{eq:Deltaxy}. As we eluded to in Subsection 3.2
we claim the contribution of $\Delta^{x,y}_T$ to $\Omega_T^{x,y}$ is negligible compared to $\Psi^{x,y}_T$. Also note that
\begin{equation}
\label{eq:Delta}
\Delta_T := \Omega_T - \Psi_T = {-}\frac{1}{T} \int_T^\iy \E[Q(t)] - \E[Q(\iy)]\,\dd t.
\end{equation}
can be derived through $\Delta^{x,y}_T$ in a similar manner as we did for $\Psi^{x,y}_T$ to obtain $\Psi_T$.
To substantiate our claim, we compute an upper bound for $\Delta^{x,y}_T$ of order $1/T^2$. The existence of such an upper bound poses a limit on the error this tail integral contributed to the cost structure as a whole. 

\begin{prop} \label{prop:truncation_error}
Let $x,y\geq 0$ and $\E[\max\{Q(0),Q_\m(\iy)\}^3] < \iy$. Then
\begin{equation}
|\Delta^{x,y}_T| \leq \frac{1}{T^2}\left(\frac{\max(y,x)^3}{3(\mu-\l)^2}+\frac{u_2 \max(y,x)^2}{2(\m-\l)^3}\right) 
\end{equation}
and
\begin{equation}
|\Delta_T| \leq \frac{1}{T^2}\left(\frac{\E[\max(Q(0),Q_\m(\iy))^3]}{3(\mu-\l)^2}+\frac{u_2 \E[\max(Q(0),Q_\m(\iy))^2]}{2(\m-\l)^3}\right).
\end{equation}

\end{prop}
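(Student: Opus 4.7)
My plan is to exploit a layer-cake representation of $Y^{x,y}(t)$ together with Markov's inequality applied to the first-passage moments obtained via Theorem~2. Without loss of generality I assume $x\geq y$; the opposite case follows from $Y^{x,y}=-Y^{y,x}$, which leaves $|\Delta^{x,y}_T|$ invariant. Writing $I(t) := -\inf_{0\leq s\leq t} X(s)$ for the running infimum of the net-input process, the definition of the first-passage time gives the equivalence $\{\tau^z(0)>t\} = \{I(t)<z\}$, and comparing the three branches in \eqref{eq:Yxy} against this equivalence yields the key identity
\[
Y^{x,y}(t) \;=\; \int_y^x \mathbf{1}\{\tau^z(0) > t\}\,\dd z.
\]
This reduces the analysis of $Y^{x,y}(t)$ to the family of hitting times $\tau^z(0)$, whose Laplace transform is explicit through Theorem~2.

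Combining this identity with $\Delta_T^{x,y} = -\tfrac{1}{T}\int_T^\iy \E[Y^{x,y}(t)]\,\dd t$ (equation \eqref{eq:Delta}) and Fubini's theorem yields
\[
|\Delta_T^{x,y}| \;\leq\; \frac{1}{T}\int_y^x \E\!\left[(\tau^z(0)-T)^+\right]\,\dd z.
\]
Markov's inequality at second order then gives $\E[(\tau^z(0)-T)^+] = \int_T^\iy P(\tau^z(0)>t)\,\dd t \leq \E[\tau^z(0)^2]/T$, which is precisely the step that determines the $1/T^2$ order of the final estimate.

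It remains to compute $\E[\tau^z(0)^2]$. Theorem~2 applied to $\hat X=-X$ shows that $z\mapsto \tau^z(0)=\hat\tau^0(z)$ is a subordinator with Laplace exponent $\phi=\Upsilon^{-1}$, so $\E[e^{-u\tau^z(0)}]=e^{-z\phi(u)}$. Expanding to order $u^2$ gives $\E[\tau^z(0)^2] = z^2\phi'(0)^2 - z\phi''(0)$, and the chain rule together with $\Upsilon'(0)=\mu-\l$ and $\Upsilon''(0)=\Var\hat X(1)=\l u_2$---both read off from \eqref{eq:thmCharExp} by differentiation at $\th=0$---produces a polynomial in $z$ of degree two. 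Inserting this into the previous display and evaluating $\int_y^x z^k\,\dd z=(x^{k+1}-y^{k+1})/(k+1)\leq \max(x,y)^{k+1}/(k+1)$ for $k=1,2$ yields the first inequality in the proposition. The bound on $|\Delta_T|$ then follows by randomisation of $x,y$ over $Q(0)$ and $Q_\m(\iy)$, since $\Delta_T=\E[\Delta_T^{Q(0),Q_\m(\iy)}]$ and the triangle inequality transfers the pointwise bound to an expectation. The principal obstacle is spotting the layer-cake identity in the first paragraph: once available, integrating against $\dd z$ produces exactly the $1/3$ and $1/2$ coefficients of the stated bound, whereas the cruder pointwise estimate $Y^{x,y}(t)\leq\max(x,y)\mathbf{1}\{\tau^{\max(x,y)}(0)>t\}$ gives substantially weaker constants.
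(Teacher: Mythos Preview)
Your proof is essentially the paper's: both express $Y^{x,y}(t)$ through the layer-cake identity $\int \mathbf 1\{\tau^z(0)>t\}\,\dd z$ (the paper writes it as $\int_0^x \P(\tau^{x-u}(0)>t)\,\dd u$ after first bounding $Y^{x,y}\le Y^{x,0}$), apply Markov's inequality $\P(\tau^z(0)>t)\le \E[\tau^z(0)^2]/t^2$, evaluate the second moment via Theorem~2, and integrate in $z$. The only cosmetic difference is that you keep the integration over $[y,x]$ and bound $x^{k+1}-y^{k+1}\le \max(x,y)^{k+1}$ at the end, whereas the paper discards $y$ at the outset; incidentally your identification $\Upsilon''(0)=\Var \hat X(1)=\l u_2$ is the correct one, and the paper's ``$=u_2$'' in the proof is a slip (conflating the triplet of $U$ with that of $X_\mu$), so your bound carries an extra factor $\l$ in the second term relative to the proposition as stated---the $O(1/T^2)$ conclusion is of course unaffected.
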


\section{Optimization \label{sec:optimization}}
The result in Theorem 1, characterizing the leading order behavior of $\Omega_T(\mu)$, also reveals the behavior of $\Pi_T(\mu)$ in leading order. Namely, 
\begin{equation}
\Pi_T(\mu) = \Pi_\iy(\mu) + \Psi_T(\mu) + O(1/T^2).
\end{equation}
In fact, this representation naturally gives rise to an \emph{approximation} of the actual cost function:
\begin{align}\label{eq:decomposition}
\hat{\Pi}_{T}(\mu) := \Pi_{\iy}(\mu) + \Psi_T(\mu) 
\end{align}
(We again include $\mu$ in the descriptions of variables derived in previous sections, because of the central role this decision variable will be playing within this section.)
Denote the corresponding minimizer of $\Pih$ by
\begin{equation}\label{eq:muhat}
\hat{\mu}_T^\star := \arg\min_{\m\geq 0} \Pih(\mu), \qquad \Pih^\star := \Pih(\hat{\mu}_T^\star)
\end{equation}
in addition to the definitions in \eqref{eq:muStar} and \eqref{eq:piStar}.
This section is devoted to the analysis of the minimizers $\muT$, $\muh$ and $\mui$, and the optimality gap for the two approximations. 

Throughout this section, we assume that $u_2, u_3 <\iy$ and $\E[Q(0)^2] <\iy$.

By its definition in \eqref{eq:PiInf} and Lemma \ref{lemma:workloadmoments}, we have an optimal expression for the steady-state cost function
\begin{equation}
\Pi_{\iy}(\mu) = \frac{\l u_2}{2(\mu-\l)} + \a\mu.
\end{equation}
It is easily verified that $\Pi_{\iy}$ is strictly convex in $\mu$, e.g. by observing that $\Pi_{\iy}''(\mu) > 0$ for all $\mu > \l$. Therefore $\Pi_{\iy}$ has a unique global minimizer and 
\begin{equation}
\label{eq:muInf}
\mui = \l + \sqrt{\frac{\l u_2}{2\a}}, \qquad  \Pi_{\iy}^\star = \a\l + \sqrt{2\a\l u_2}.
\end{equation}
We are interested in the relation between $\mui$ and $\muT$, and $\muh$ and $\muT$. 
Since $\Pi_{T}(\mu) = \Pi_{\iy}(\mu) + O(1/T)$ for all $\mu > \l$, we have pointwise convergence of the sequence $\Pi_{T}$, as well as $\hat{\Pi}_{T}$, to $\Pi_{\iy}$ for $T\to\iy$, we also expect $\muT \to \mui$ and $\muh\to\mui$ for $T\to\iy$.
Before proving that this convergence indeed holds, we result a result on the strict convexity of the function $\Pi_{T}$. 

\begin{lemma}
Let $\mu\geq 0$. The function $\Pi_{T}(\mu)$ is 
\begin{itemize}
\item convex in $\m$, if $Q(0)\equiv x$, $T<x/\mu$ and $\sigma=0$,
\item strictly convex in $\mu$, otherwise. 
\end{itemize}
\end{lemma}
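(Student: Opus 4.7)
My strategy is to establish convexity pathwise and then determine precisely when it fails to be strict. First, I would rewrite the workload using~\eqref{eq:Qlm} as
\[
Q_\mu(t) \;=\; U(\l t) - \mu t + \max\!\bigl\{\,Q(0),\; I(\mu,t)\,\bigr\}, \qquad I(\mu,t) := \sup_{s\in[0,t]}\bigl(\mu s - U(\l s)\bigr).
\]
For each sample path of $U$ and every fixed $t\geq 0$, the family $\{\mu\mapsto\mu s - U(\l s)\}_{s\in[0,t]}$ consists of affine functions of $\mu$, so $I(\cdot,t)$ is convex as their pointwise supremum. The outer maximum with the $\mu$-independent quantity $Q(0)$ is again convex, and the additive term $U(\l t)-\mu t$ is affine. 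Applying $\E[\cdot]$, integrating over $[0,T]$, and adding $\a\mu$ preserves convexity, establishing that $\Pi_T$ is convex in $\mu$.

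Second, I would dispatch the degenerate case $Q(0)\equiv x$, $\s=0$, $T<x/\mu$. Since $\s=0$ and $U$ is spectrally positive (and, in the natural $M/G/1$-type setting, non-decreasing), $U(\l s)\ge 0$, whence $\mu s - U(\l s)\le \mu s$ and $I(\mu,t)\le \mu t \le \mu T < x$ for every $t\in[0,T]$. The outer max therefore equals $Q(0)=x$ identically, giving pathwise
\[
Q_\mu(t) \;=\; x + U(\l t) - \mu t,
\]
which is \emph{affine} in $\mu$. Consequently $\Pi_T(\mu) = x + \tfrac{1}{2}T(\l-\mu) + \a\mu$ is affine, i.e.\ convex but not strictly convex.

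Third, to obtain strict convexity in every other configuration, it suffices to exhibit a subset of $[0,T]\times\Omega$ of positive product measure on which $\mu\mapsto Q_\mu(t,\omega)$ is strictly convex; Fubini then upgrades the overall convexity of $\Pi_T$ to strict convexity. The outer max is strictly convex precisely where $I(\mu,t)>Q(0)$ \emph{and} $I(\cdot,t)$ is itself strictly convex; the envelope theorem gives $\partial_\mu I(\mu,t) = s^\star(\mu,t)$ (the argmax), so strict convexity of $I$ reduces to strict monotonicity of $s^\star$ in $\mu$. When $\s>0$, the Brownian component of $U$ makes $s^\star(\mu,t)$ strictly increasing in $\mu$ almost surely---the paths of $\mu s - \s B(\l s)$ have unbounded variation on every sub-interval---and $I(\mu,t)>Q(0)$ occurs with positive probability for each $t>0$. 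When $\s=0$ but $Q(0)$ is non-degenerate, $\Pr(Q(0)<\mu T)>0$ and on that event the workload reaches zero inside $[0,T]$, reducing matters to the pure-jump argument below. Finally, when $\s=0$, $Q(0)\equiv x$ but $T\ge x/\mu$, on the positive-probability event of no jumps of $U$ during $[0,x/\mu]$ the workload vanishes before $T$, and thereafter $s^\star(\mu,t)$ shifts monotonically from one arrival epoch to the next as $\mu$ grows, producing strict convexity on an interval of positive length.

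The hard part will be the rigorous verification that $s^\star(\mu,t)$ is strictly increasing in $\mu$ on a set of positive product measure: for $\s>0$ this rests on the a.s.\ nowhere-differentiability of Brownian paths, while in the pure-jump regime one must show that the maximizer of the affine-in-$s$ map $\mu s - U(\l s)$ is forced to consecutive jump epochs of $U$ as $\mu$ varies. Once this pathwise strictness is secured on a set of positive measure, the promotion to strict convexity of $\frac{1}{T}\int_0^T \E[Q_\mu(t)]\,\dd t$ is routine.
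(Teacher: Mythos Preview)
Your convexity argument and the degenerate case are correct and match the paper: writing $Q_\mu(t)=U(\l t)-\mu t+\max\{Q(0),I(\mu,t)\}$ with $I(\mu,t)=\sup_{s\le t}(\mu s-U(\l s))$ gives pathwise convexity, and when $\s=0$, $Q(0)\equiv x$, $T<x/\mu$ the max collapses to $x$, yielding an affine function.

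The strict-convexity part has a genuine gap. You assert that it suffices to find a subset of $[0,T]\times\Omega$ of positive product measure on which $\mu\mapsto Q_\mu(t,\omega)$ is \emph{strictly convex}. In the compound-Poisson (finite-activity, $\s=0$) case this set is empty: for each fixed $(t,\omega)$ the map $s\mapsto\mu s-U(\l s)$ is piecewise affine in $s$, so $I(\mu,t)$ is a finite maximum of affine-in-$\mu$ functions and therefore piecewise linear in~$\mu$; the same is true of $Q_\mu(t,\omega)$. Your envelope picture is correct that $s^\star(\mu,t)$ jumps from one arrival epoch to the next, but between jumps it is constant, so $I(\cdot,t)$ is nowhere strictly convex. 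What is actually required is the weaker but $\mu$-dependent statement: for each fixed $\mu_1<\mu_2$ and $a\in(0,1)$, the set of $(t,\omega)$ on which the secant inequality is strict has positive product measure. This cannot be reduced to a single set $A$ chosen before $\mu_1,\mu_2$.

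The paper proceeds precisely along these lines. It fixes $\mu_1<\mu_2$, sets $\mu_3=a\mu_1+(1-a)\mu_2$, and writes $D(t)=aQ_{\mu_1}(t)+(1-a)Q_{\mu_2}(t)-Q_{\mu_3}(t)$. Coupling all three workloads to the same input $U$ and using that $X$ has no negative jumps, it orders the first-passage times $\tau(x,\mu_2)<\tau(x,\mu_3)<\tau(x,\mu_1)$ pathwise and decomposes $D(t)$ according to which of these have occurred. On the event $\{\tau(x,\mu_3)\le t<\tau(x,\mu_1)\}$ the paper obtains $D(t)\ge a\bigl(x+\inf_{s\le t}[U(\l s)-\mu_1 s]\bigr)>0$, so strict convexity holds whenever $\P(\tau(x,\mu_3)\le t<\tau(x,\mu_1))>0$ for some $t\in[0,T]$; the only obstruction is exactly your degenerate case (and for $x=0$ an analogous argument with the first excursion is given). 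This hitting-time decomposition works uniformly in $\s$ and avoids the envelope/argmax route altogether; in particular it does not need the delicate claim that Brownian nowhere-differentiability forces $s^\star(\cdot,t)$ to be strictly increasing, which you flag as the hard part but do not prove.
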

Building upon strict convexity of both $\Pi_T(\mu)$ and $\Pi_\iy(\mu)$ for $\mu>\l$, we derive the following convergence result. The proof can be found in Appendix B. 
\begin{prop}
Let $\muT$, $\muh$ and $\mui$ be as defined in \eqref{eq:muStar} and \eqref{eq:muhat}. Then 
\begin{equation}
\muT \to \mui\, \qquad \text{\rm and } \qquad \muh \to \mui,
\end{equation}
for $T\to\infty$.
\end{prop}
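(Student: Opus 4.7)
The plan is to combine convexity (from the preceding Lemma) with pointwise convergence of $\Pi_T$ and $\hat\Pi_T$ to $\Pi_\iy$ on $(\l,\iy)$, and then execute a short squeeze argument around the unique minimizer $\mui$ of the limit function. The strict convexity of $\Pi_\iy$ at $\mui$ does all the real work; convexity (not strict) of the finite-horizon cost functions is enough to convert it into convergence of minimizers.

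First I would record pointwise convergence on $(\l,\iy)$. The decomposition \eqref{eq:decomp} gives $\Pi_T(\mu)=\Pi_\iy(\mu)+\Omega_T(\mu)$, and Theorem 1 asserts $\Omega_T(\mu)=\mathcal{O}(1/T)$ for each fixed $\mu>\l$; hence $\Pi_T(\mu)\to\Pi_\iy(\mu)$. The analogous statement $\hat\Pi_T(\mu)\to\Pi_\iy(\mu)$ is immediate from \eqref{eq:decomposition} and the explicit $\mathcal{O}(1/T)$ formula for $\Psi_T(\mu)$ provided by Corollary 1 after randomization of $x,y$ through $Q(0)$ and $Q_\mu(\iy)$.

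Next comes the squeeze. Fix $\mu_1,\mu_2$ with $\l<\mu_1<\mui<\mu_2$. Strict convexity of $\Pi_\iy$ at its unique minimizer gives $\Pi_\iy(\mu_i)>\Pi_\iy(\mui)$ for $i=1,2$, and pointwise convergence therefore yields $\Pi_T(\mu_1)>\Pi_T(\mui)$ and $\Pi_T(\mu_2)>\Pi_T(\mui)$ for all $T$ sufficiently large. If one had $\muT<\mu_1$, then $\mu_1=\lambda\muT+(1-\lambda)\mui$ for some $\lambda\in(0,1)$, and by convexity of $\Pi_T$ on $[0,\iy)$ together with the minimizing property of $\muT$,
\begin{equation*}
\Pi_T(\mu_1)\leq\lambda\Pi_T(\muT)+(1-\lambda)\Pi_T(\mui)\leq\Pi_T(\mui),
\end{equation*}
contradicting the strict inequality above; the case $\muT>\mu_2$ is symmetric. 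Hence $\muT\in[\mu_1,\mu_2]$ for $T$ large, and letting $\mu_1\uparrow\mui$, $\mu_2\downarrow\mui$ gives $\muT\to\mui$. Replacing $\Omega_T$ by $\Psi_T$ throughout yields $\muh\to\mui$ by the same argument.

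The main point to watch is that the argument only needs convexity, not strict convexity, of $\Pi_T$ and $\hat\Pi_T$; this is supplied by the preceding Lemma in every case, including the degenerate one. Notably, no coercivity estimates near $\mu=\l$ or at infinity are needed: the bracketing interval $[\mu_1,\mu_2]$ is fixed and lies strictly inside $(\l,\iy)$, so the argument never touches the boundary of the feasible region. The only mild subtlety is verifying that the $\mathcal{O}(1/T)$ remainder in Theorem 1 is genuinely pointwise in $\mu$ (rather than uniform over a compact set), which is clear from the explicit form of the leading term in that theorem.
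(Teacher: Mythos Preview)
Your argument for $\muT\to\mui$ is correct and takes a different, more elementary route than the paper. The paper proves a general auxiliary lemma: if strictly convex $f_n$ converge pointwise to a strictly convex $f$ that blows up at the endpoints of the domain, then minimizers converge; the proof goes through Bolzano--Weierstrass and uniform convergence on compacts. Your three-point squeeze avoids all of this and, as you note, needs only \emph{convexity} of $\Pi_T$ together with strict convexity of the limit $\Pi_\iy$ at $\mui$; so it handles the degenerate case of the preceding Lemma for free, whereas the paper's lemma formally requires strict convexity of every $f_n$. One minor quibble: you overload $\lambda$ for both the arrival rate and the convex-combination coefficient.

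For $\muh\to\mui$ there is a genuine gap, though the paper's own proof shares it. Your closing sentence asserts that the same squeeze works with $\Psi_T$ in place of $\Omega_T$, but the needed premise---convexity of $\hat\Pi_T$ on the relevant interval---is not supplied by the preceding Lemma (which concerns $\Pi_T$, not $\hat\Pi_T$) and in fact fails globally: the explicit formula for $\Psi_T$ contains the term $-\tfrac{\l^2 u_2^2}{4T(\mu-\l)^3}$, so $\hat\Pi_T(\mu)\to-\iy$ as $\mu\downarrow\l$ and $\hat\Pi_T$ is not convex on $(\l,\iy)$ for any finite $T$. In particular $\arg\min_{\mu\geq 0}\hat\Pi_T(\mu)$ does not exist as stated. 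A clean repair is to interpret $\muh$ as the unique critical point of $\hat\Pi_T$ in a fixed compact interval $[\mu_1,\mu_2]\subset(\l,\iy)$ containing $\mui$: on such an interval $\hat\Pi_T''>0$ once $T$ is large enough (the $O(1/T)$ concave terms are dominated by $\Pi_\iy''$), and then your squeeze applies verbatim with $[\mu_1,\mu_2]$ as the ambient domain. The paper's proof of Proposition~2 is silent on $\hat\Pi_T$ and so has the same lacuna.
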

The next result describes a refinement of $\muT$ in terms of $\mui$.
\begin{prop}\label{prop:muBullet}
For $T$ sufficiently large,
\begin{equation}
\muT = \mui + \frac{\mu_\bullet}{T} + o(1/T),
\end{equation}
where
\begin{equation}\label{eq:muBullet}
\mu_\bullet = \frac{\E[Q(0)^2]}{\sqrt{8\l u_2\a}} - \frac{u_3}{3 u_2} - 3\sqrt{\frac{\a\l u_2}{8}}.
\end{equation}
\end{prop}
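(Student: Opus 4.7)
My plan is to derive $\mu_\bullet$ via an expansion of the first-order optimality condition around $\mui$, exploiting the decomposition
$$\Pi_T(\mu) = \Pi_\iy(\mu) + \frac{\psi(\mu)}{T} + R_T(\mu),$$
where $\psi(\mu) := T\Psi_T(\mu)$ is the $\mu$-dependent coefficient identified in Theorem~1 and $R_T(\mu) = O(1/T^2)$ by Proposition~\ref{prop:truncation_error}. By the strict convexity result of the preceding lemma and Proposition~1 on convergence, for $T$ large enough $\muT$ is uniquely characterized by $\Pi_T'(\muT)=0$ and lies in a small neighborhood of $\mui$. Writing the ansatz $\muT = \mui + \delta_T$ with $\delta_T \to 0$, I would substitute into the FOC and Taylor-expand each summand around $\mui$.

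Using $\Pi_\iy'(\mui)=0$ and $\Pi_\iy''(\mui) > 0$, the expansion reads
$$\Pi_\iy''(\mui)\,\delta_T + \frac{\psi'(\mui)}{T} + O\!\left(\delta_T^2 + \frac{\delta_T}{T} + \frac{1}{T^2}\right) = 0.$$
Solving at leading order forces $\delta_T = O(1/T)$ and pins down
$$\mu_\bullet = \lim_{T\to\infty} T\delta_T = -\frac{\psi'(\mui)}{\Pi_\iy''(\mui)}.$$
The remainder of the argument is a direct computation: differentiating $\psi(\mu) = \tfrac{\E[Q(0)^2]}{2(\mu-\l)} - \tfrac{\l^2 u_2^2}{4(\mu-\l)^3} - \tfrac{\l u_3}{6(\mu-\l)^2}$ and substituting the identities $(\mui-\l)^2 = \l u_2/(2\a)$ and $\Pi_\iy''(\mui) = \l u_2/(\mui-\l)^3 = 2\a/(\mui-\l)$ that follow from \eqref{eq:muInf}. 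After simplification this yields the three terms displayed in \eqref{eq:muBullet}, with the first coming from $\E[Q(0)^2]$, the third from $-3\l^2 u_2^2/(4(\mui-\l)^4)$, and the middle one from $-\l u_3/(3(\mui-\l)^3)$.

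The main obstacle is the analytic justification of the Taylor step, specifically that the remainder $R_T$ contributes $o(1/T)$ to the FOC. Pointwise $R_T(\mu) = O(1/T^2)$ is immediate from Proposition~\ref{prop:truncation_error}, but the FOC requires the derivative $R_T'(\mu)$ to be $o(1/T)$ uniformly on a neighborhood of $\mui$. I would establish this by differentiating the explicit representation $\Delta_T(\mu) = -\tfrac{1}{T}\int_T^\infty(\E[Q_\mu(t)] - \E[Q_\mu(\iy)])\,\dd t$ under the integral sign, using the sample-path coupling of Section~3.1 together with the third-moment bound already employed in the proof of Proposition~\ref{prop:truncation_error}, to dominate $\partial_\mu \E[Q_\mu(t)]$ uniformly for $\mu$ in a compact interval contained in $(\l,\infty)$. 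Combined with $\Pi_T'' \to \Pi_\iy''$ uniformly on the same neighborhood (which gives a lower bound on curvature and hence the implicit-function-type control needed to pass from $\delta_T = O(1/T)$ to $\delta_T = \mu_\bullet/T + o(1/T)$), this closes the argument.
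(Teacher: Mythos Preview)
Your approach is essentially the same as the paper's: both expand the first-order condition $\Pi_T'(\muT)=0$ around $\mui$ using the decomposition $\Pi_T = \Pi_\iy + \Psi_T + \Delta_T$, use $\Pi_\iy'(\mui)=0$, and read off $\mu_\bullet = -T\Psi_T'(\mui)/\Pi_\iy''(\mui)$ before substituting $\mui-\l = \sqrt{\l u_2/(2\a)}$. If anything you are more careful than the paper, which simply writes $\Pi_T'(\muT)=\Pi_\iy'(\muT)+\Psi_T'(\muT)+O(1/T^2)$ without separately justifying that the derivative of the $O(1/T^2)$ remainder remains $O(1/T^2)$; your paragraph on controlling $R_T'$ via differentiation under the integral sign and the coupling bounds addresses exactly this point.
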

Based on Proposition \ref{prop:muBullet} we propose a \emph{corrected staffing rule}, accounting for the finite horizon
\begin{equation}
\label{eq:correctedMu}
\tilde{\mu}_T^\star = \left[\mui + \frac{\mu_\bullet}{T}\right]^+, 
\end{equation}
with $\mu_\bullet$ as in \eqref{eq:muBullet}.
Here $[x]^+ := \max\{x,0\}$, which ensures the value of $\tilde{\mu}_T^\star$ is non-negative and thus is a feasible solution of the optimization problem.
This refined capacity allocation rule is expected to reduce the costs incurred in transient settings.
 However, the value we are particularly interested in is the cost increase for using either one of the approximations rather than the actual minimum $\muT$, that is, the \emph{optimality gap}.
 As it happens, we deduce the order of the optimality gap for $\mui$ with the help of the explicit form of $\mu_\bullet$ given in \eqref{eq:correctedMu}, which is stated in the next proposition. The proof is given in Appendix \ref{sec:proofProp4}

\begin{prop}\label{prop:optimalitygap_mui}
Let $\mui$ be as in \eqref{eq:muInf}. Then,
\begin{equation}
\Pi_T(\mui) - \Pi_T^\star = O(1/T^2).
\end{equation}
\end{prop}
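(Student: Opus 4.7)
The plan is to exploit the decomposition \eqref{eq:decomp} and write
\begin{equation*}
\Pi_T(\mui) - \Pi_T^\star = \Pi_T(\mui) - \Pi_T(\muT) = \bigl[\Pi_\iy(\mui) - \Pi_\iy(\muT)\bigr] + \bigl[\Omega_T(\mui) - \Omega_T(\muT)\bigr],
\end{equation*}
and then bound each bracket by $O(1/T^2)$ separately. The driving input is Proposition~\ref{prop:muBullet}, which yields $|\muT - \mui| = O(1/T)$, and hence $(\muT - \mui)^2 = O(1/T^2)$.

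For the first bracket, $\Pi_\iy$ is smooth on $(\l,\iy)$ with $\Pi_\iy'(\mui) = 0$, so a second-order Taylor expansion around $\mui$ gives
\begin{equation*}
\Pi_\iy(\muT) - \Pi_\iy(\mui) = \tfrac{1}{2}\Pi_\iy''(\xi)(\muT - \mui)^2
\end{equation*}
for some $\xi$ between $\mui$ and $\muT$. Since $\Pi_\iy''(\mu) = \l u_2/(\mu-\l)^3$ is continuous and finite near $\mui$ and $\muT \to \mui$, this is $O(1/T^2)$.

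For the second bracket, I would split $\Omega_T = \Psi_T + \Delta_T$ along the lines of Theorem~1, where $\Psi_T(\mu)$ is the explicit leading term and $\Delta_T$ is the remainder controlled by Proposition~\ref{prop:truncation_error}. Since $\Psi_T(\mu)$ has the form $\psi(\mu)/T$ with $\psi$ a smooth explicit function on $(\l,\iy)$, one has $\Psi_T'(\mu) = O(1/T)$ uniformly on any compact subset, so the mean value theorem yields $|\Psi_T(\mui)-\Psi_T(\muT)| = O(1/T)\cdot O(1/T) = O(1/T^2)$. For the remainder, Proposition~\ref{prop:truncation_error} supplies $|\Delta_T(\mu)| \leq C(\mu)/T^2$ with $C(\mu)$ continuous in $\mu > \l$, hence uniformly bounded on any fixed neighborhood of $\mui$; for $T$ large enough $\muT$ lies in that neighborhood, and therefore $|\Delta_T(\mui)|+|\Delta_T(\muT)| = O(1/T^2)$.

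The main obstacle is not algebraic but concerns uniformity over $\mu$: the estimates of Theorem~1 and Proposition~\ref{prop:truncation_error} are stated pointwise in $\mu$, and one must verify that their constants depend continuously on $\mu > \l$ in order to apply them at the moving argument $\muT$ as well as at the fixed point $\mui$. Once this continuity is established, summing the two bracket bounds delivers $\Pi_T(\mui) - \Pi_T^\star = O(1/T^2)$.
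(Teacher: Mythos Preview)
Your proof is correct and follows essentially the same strategy as the paper's: decompose $\Pi_T = \Pi_\infty + \Psi_T + \Delta_T$, dispose of $\Delta_T$ directly via Proposition~\ref{prop:truncation_error}, and use the first-order optimality condition $\Pi_\infty'(\mui)=0$ together with $|\muT-\mui|=O(1/T)$ from Proposition~\ref{prop:muBullet} to show the remaining difference is $O(1/T^2)$. The only cosmetic difference is that the paper keeps $\Pi_\infty$ and $\Psi_T$ bundled as $\hat\Pi_T$ and expands that combination explicitly in powers of $\mui-\muT$, whereas you treat the two pieces separately via Taylor's theorem and the mean value theorem; your explicit flagging of the uniformity-in-$\mu$ issue for the $\Delta_T$ bound is a point the paper's proof glosses over.
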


\section{Numerical experiments}
\label{sec:numerics}

\subsection{Influence of $\Omega_{T}(\mu)$}

We first assess the contribution of the correction to the cost function provided by Theorem 1. In other words, we investigate whether $\hat{\Pi}_{T}(\mu)$ as in \eqref{eq:PiT} yields a significantly better fit to $\Pi_{T}(\mu)$, than $\Pi_{\iy}(\mu)$ does. 
Note that these three functions only differ in the costs describing the congestion. 
Therefore, we limit our study in this subsection to the evaluation of $C_T(\mu)$ as in \eqref{eq:CTmu} with stationary equivalent $C_{\iy}(\mu) = \E[Q_{\m}(\iy)]$. 
Our novel approximation hence reads 
\begin{equation}
\hat{C}_{T}(\mu) := C_{\infty}(\mu) + \Omega_{T}(\mu),
\end{equation}
with $\Omega_{T}(\mu)$ given in \eqref{eq:mainResult}.

We conduct our numerical experiments based on three models, namely: 

\begin{enumerate}
\item \underline{$M/M/1$ queue}: $U(t)$ is a unit rate compound Poisson process with exponentially distributed increments. We have $u_2 = 2$, $u_3=3$, so that
\begin{equation}\label{eq:MM1cor}
\hat{C}_{T}(\mu) = \frac{\l}{\mu-\l} + \frac{1}{T(\mu-\l)} \left(\frac{x^2}{2} - \frac{\l^2}{(\mu-\l)^2} - \frac{\l}{\mu-\l}\right).
\end{equation}
\item \underline{$M/{\rm Pareto}/1$ queue}: $U(t)$ is a unit rate compound Poisson process with Pareto increments. The Pareto distribution deserves special attention due to its heavy-tailed nature, having tail probability $\bar{F}(x) = (x/k)^{-\g}$, if $x\geq k$ and 1 otherwise. 
It is well-known that heavy-tailed service times lead to long relaxation time. For our purposes, we fix shape parameter $\g = 16/5$ and scale parameter $k=11/16$, so that $\b = 1$, $u_2 = 121/96$, $u_3 = 1331/256$ and $u_k=\iy$ for all $k>3$. Hence, 
\begin{equation}
\label{eq:MP1cor}
\hat{C}_{T}(\mu) = \frac{121\l}{192(\mu-\l)} + \frac{1}{2T(\mu-\l)}
\left( x^2 - \frac{(121\l/96)^2}{2(\mu-\l)^2} - \frac{ 1331\l/256 }{2(\mu-\l)}\right)
\end{equation}
\item \underline{Reflected Brownian motion}: $U(t)$ is Brownian motion with drift 1 and infinitesimal variance $\s^2$. We have $u_2 = \sigma^2$, $u_3=0$, so that
\begin{equation}\label{eq:RBMcor}
\hat{C}_{T}(\mu) = \frac{\l\sigma^2}{2(\mu-\l)} + \frac{1}{2T(\mu-\l)} \left( x^2 - \frac{\l^2\sigma^4}{2(\mu-\l)^2}\right).
\end{equation}
\end{enumerate} 

Let $C_T^{\l}(\mu)$ denote the cost function given arrival rate $\l$.
Although we want to explore a variety of parameter settings for these three settings, one can deduce from the identity in \eqref{eq:Qidentity} that $C_{T}^{\l}(\mu) \equiv C_{\l T}^{1}(\mu/\l)$ and $\Omega_{T}^\l(\mu) \equiv \Omega_{\l T}^1(\mu/\l)$.
This implies that it suffices to evaluate the systems for $\l=1$, since we directly obtain the measures for any other value of $\l$ by scaling the variable $\mu$ and parameter $T$ appropriately.  

For the $M/M/1$ and $M/{\rm Pareto}/1$ queue, we obtained the function $C_{T}(\mu)$ with $\l=1$ through simulation and are accurate up until a 95\% confidence interval of width $10^{-3}$. For reflected Brownian motion, we used the explicit distribution function given in \cite{Harrison1985} for double numerical integration. The results for several values of $T$ and two different starting states are depicted in Figures 4-6. These plots also include the approximated functions $\hat{C}_{T}(\mu)$. 
\begin{figure}[h]
\centering
\begin{subfigure}{0.48\textwidth}
\includegraphics[scale=0.55]{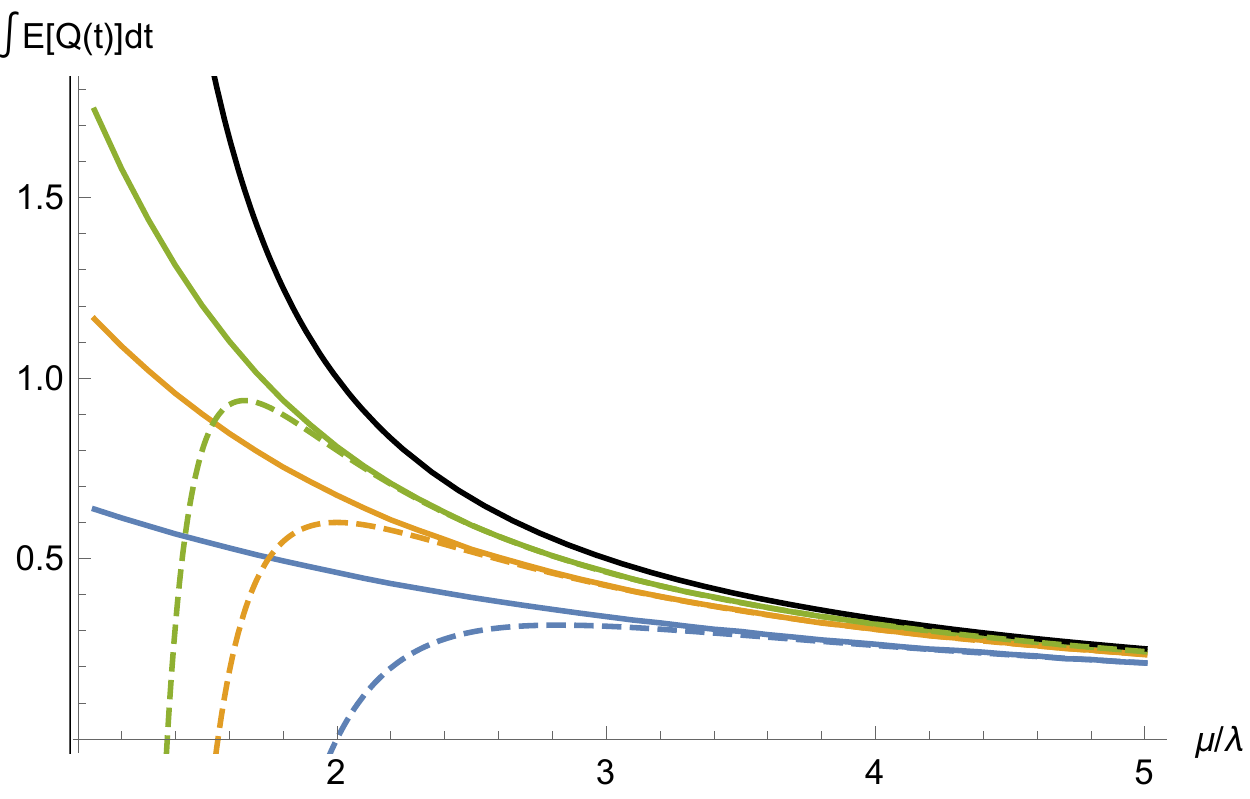}
\caption{$x=0$}
\end{subfigure}
\begin{subfigure}{0.48\textwidth}
\includegraphics[scale=0.55]{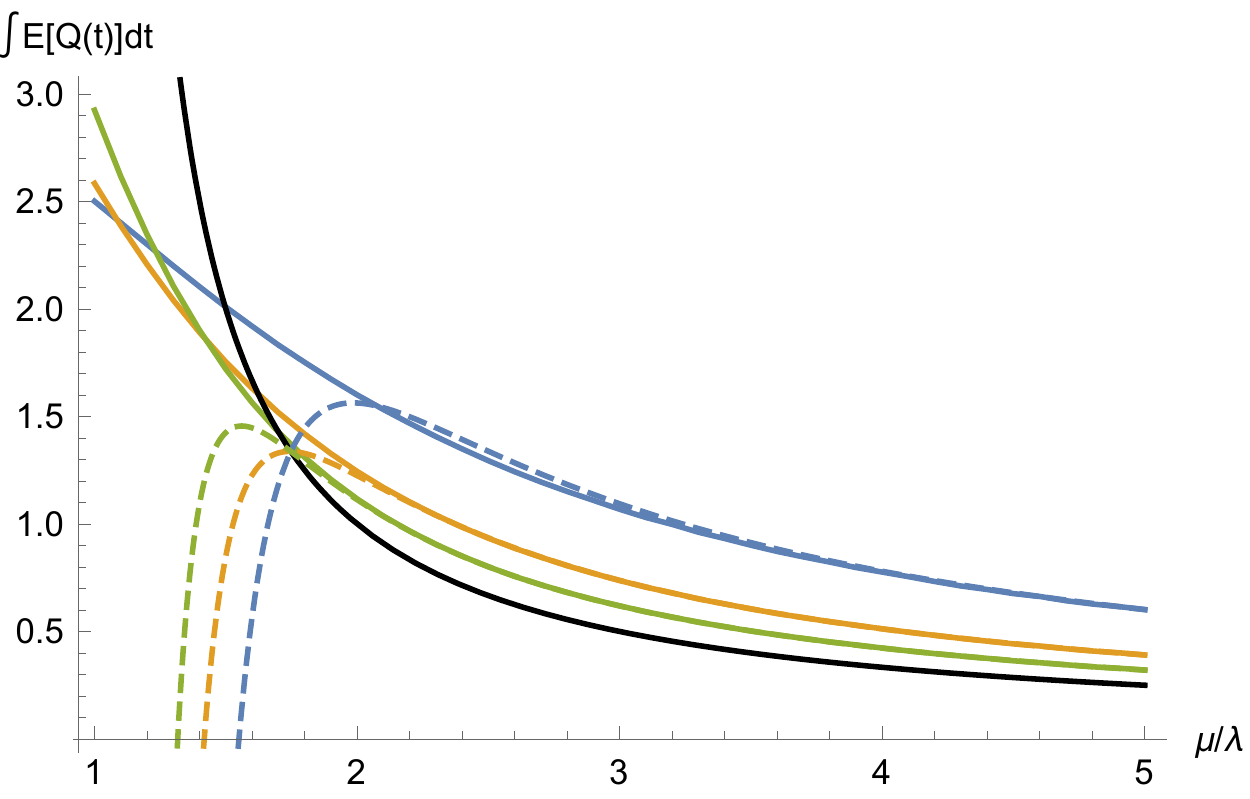}
\caption{$x=2.5$}
\end{subfigure}
\caption{$C_{T}(\mu)$ as a function of $\mu$ for $M/M/1$ for $T=2$ (blue), $T=5$ (yellow) and $ T=10$ (green) with their approximative equivalents $\hat{C}_{T}(\mu)$ (dashed) and $C_{\iy}(\mu)$ (black).}
\end{figure}
\begin{figure}[h!]
\centering
\begin{subfigure}{0.48\textwidth}
\includegraphics[scale=0.55]{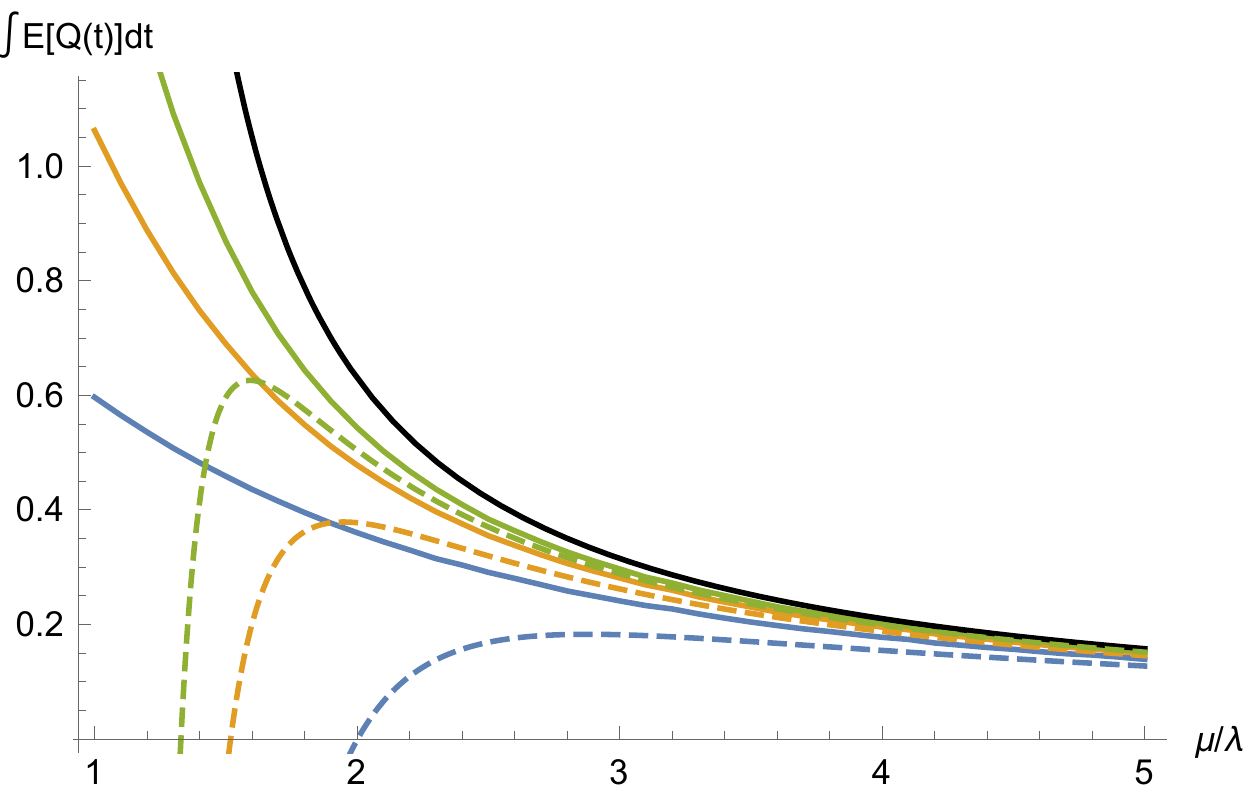}
\caption{$x=0$}
\end{subfigure}
\begin{subfigure}{0.48\textwidth}
\includegraphics[scale=0.55]{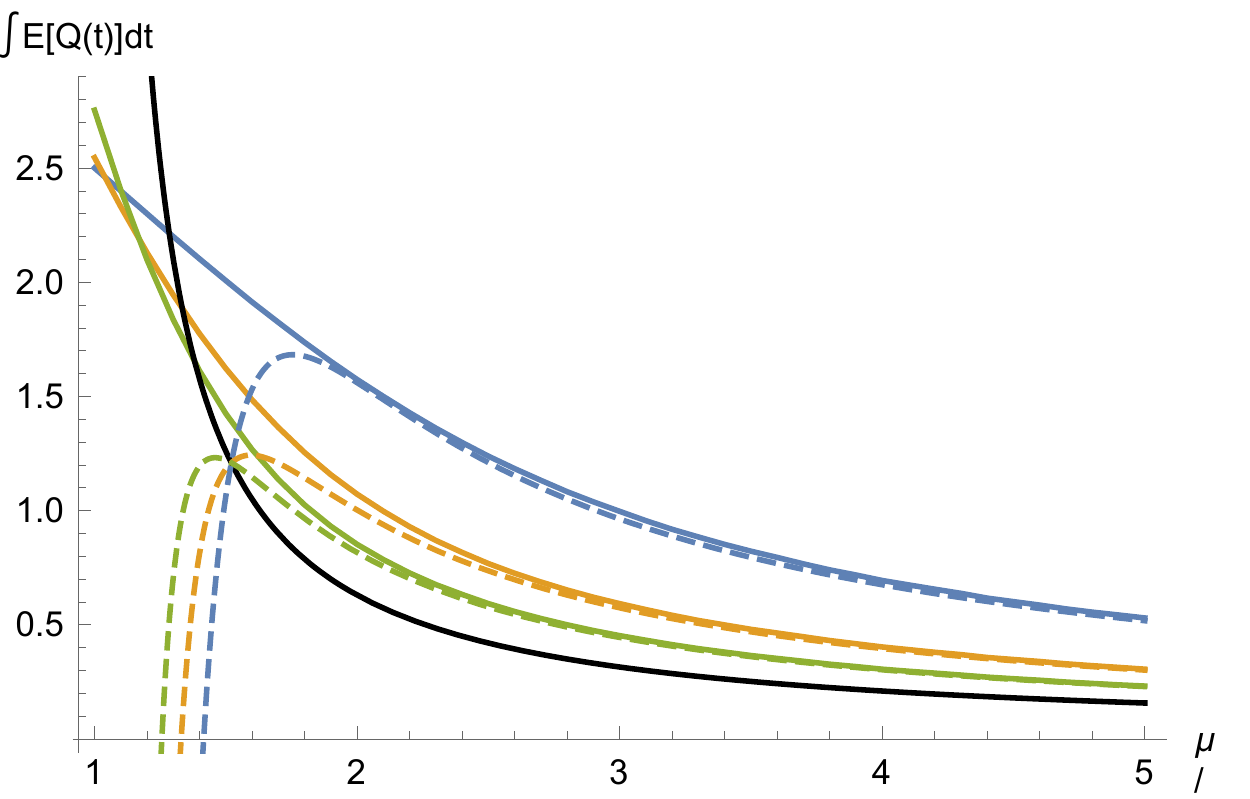}
\caption{$x=2.5$}
\end{subfigure}
\caption{$C_{T}(\mu)$ as a function of $\mu$ for $M/{\rm Pareto}/1$ for $T=2$ (blue), $T=5$ (yellow) and $T=10$ (green) with their approximative equivalents $\hat{C}_{T}(\mu)$ (dashed) and $C_{\iy}(\mu)$ (black).}
\end{figure}
\begin{figure}[h!]
\centering
\begin{subfigure}{0.48\textwidth}
\includegraphics[scale=0.55]{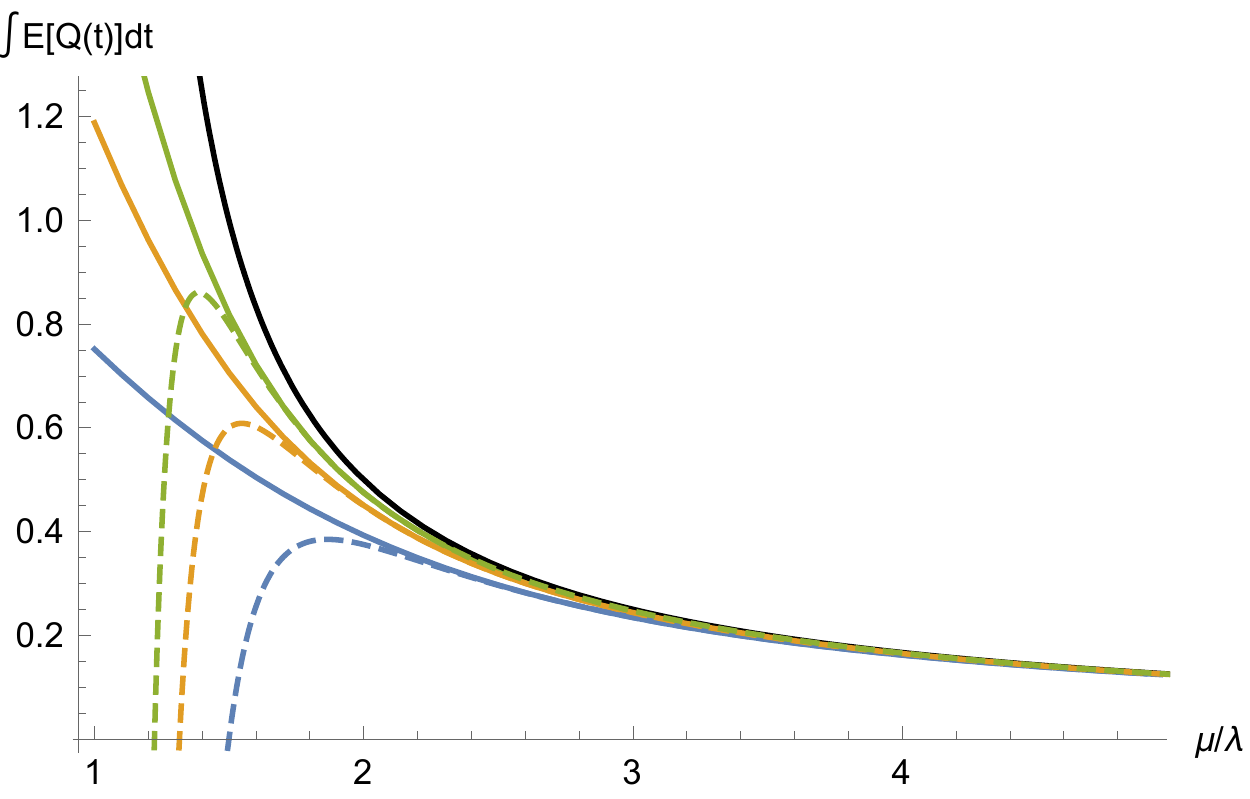}
\caption{$x=0$}
\end{subfigure}
\begin{subfigure}{0.48\textwidth}
\includegraphics[scale=0.55]{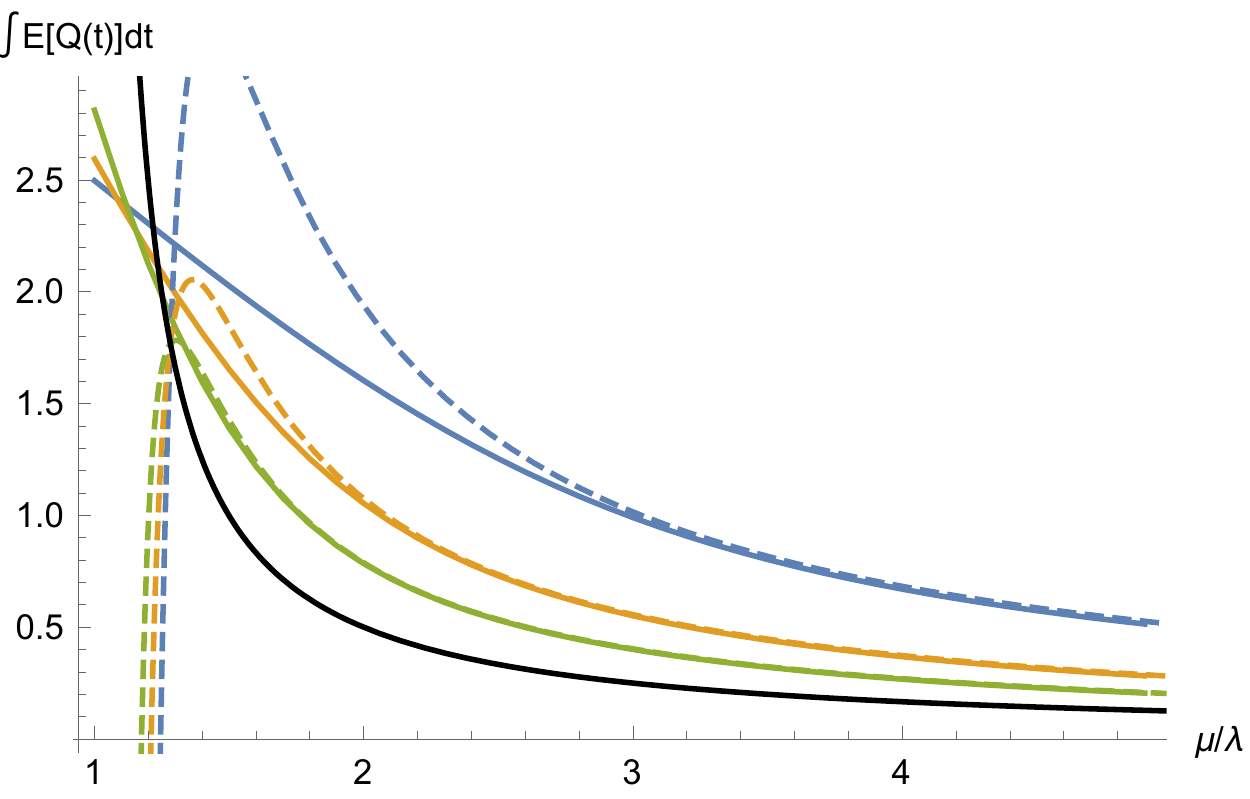}
\caption{$x=2.5$}
\end{subfigure}
\caption{$C_{T}(\mu)$ as a function of $\mu$ for RBM with $\sigma=1$ for $T=2$ (blue), $T=5$ (yellow) and $T=10$ (green) with their approximative equivalents $\hat{C}_{T}(\mu)$ (dashed) and $C_{\iy}(\mu)$ (black).}
\end{figure}
We name a few observations based on these figures. 

First, we indeed note the pointwise convergence of $\hat{C}_{T}(\mu)$ to $\hat{C}_{\iy}(\mu)$ as $T$ grows, for all $\mu$ in all three cases. However, the difference between the stationary costs and those for small values of $T$ can be significant. This is most clear in the plots with $x=2.5$ and when $\mu$ is close to $\l$, i.e. it is in heavy-traffic. In these scenarios, it is evident that refinements of the stationary costs are needed. $\hat{C}_{T}(\mu)$ does a fairly good job at providing such correction, especially for moderate values of $\mu$. 

Furthermore, we note that $C_{T}(\mu)$ approaches $C_{\iy}(\mu)$ from below for $x=0$ for any value of $\mu$, while this is not strictly the case for $x>0$. 
$\hat{C}_{T}(\mu)$ correctly captures the sign of this correction. 

Finally, observe that $\hat{C}_{T}(\mu)\to -\iy$ as $\mu$ approaches $\l$. This divergence is clear from the expressions in \eqref{eq:MM1cor}-\eqref{eq:RBMcor}. 
Our correction term relies on the premise that under the coupling scheme, the sample paths of the two queues starting from different states have hit with high probability.
This is equivalent to saying that the `largest' of the two queues is has emptied at least once before time $T$. However, as $\mu$ approaches $\l$, the system enters heavy traffic, and hence the hitting time of the zero barrier is set to run off to infinity. 
Consequently, this causes our approximation to be inaccurate for small values of $\mu$.

\subsection{Validation of corrected staffing rule}

In this section, we examine whether the corrected staffing rule $\tilde{\mu}_T^\star$ as in \eqref{eq:correctedMu} indeed yields a significant cost reduction over the choice of $\mui$ by comparing their true costs $\Pi_{T}(\tilde{\mu}_T^\star)$ and $\Pi_{T}(\mui)$.
We conduct this comparison for different values of the parameters, $\a$, $T$ and starting state $x$ through numerical experiments.  
The three models on which we do our calculations are the $M/M/1$ queue, the $M/$Pareto$/1$ queue and the reflected Brownian motion, as introduced in the previous subsection. 
Extending the reasoning of the previous subsection, saying that the cost function for general $\l>0$ can be fit into the case $\l=1$ while applying additional scaling, we focus on the latter scenario only. 

For each of the three models, we adhere to the following set-up. The quality of both staffing rules is assessed for $\a = 0.1, 1$ and 2, resembling three modes of valuation of the QoS in the system. 
As a benchmark, observe that the expected workload in steady-state conditions with staffing level $\mui$ equals
\begin{equation}
C_\iy(\mui) = \sqrt{\frac{\a\l u_2}{2}}.
\end{equation}
For each value of $\a$, we consider two scenarios: one in which the system starts empty, i.e. $x=0$, and one in which the initial state is double this benchmark value, thus $x=\sqrt{2\a\l u_2}$.
The numerics will be presented for each model separately. Afterwards, we discuss the conclusions we are able to draw from these results.

\subsubsection*{$M/M/1$ queue}
As we discussed before, if $U$ is a unit rate compound Poisson process with exponentially distributed increments, then $\Qlm$ describes the workload process in an $M/M/1$ queue. 
For this setting we get

\begin{equation}
 \mui = \l + \sqrt{\frac{\l}{\a}},\qquad \tilde{\mu}_T^\star = \left[\l + \sqrt{\frac{\l}{\a}} + \frac{1}{T}\left( \frac{x^2}{4\sqrt{\l\a}} - 1 - \frac{3}{2} \sqrt{\l\a}\right)\right]^+.
 \end{equation} 
 
 Table \ref{tab:mm1} presents the actual costs corresponding to these two staffing levels for different value of $x$ and $\a$. 
 
\begin{table}[h!]
\centering\resizebox{15cm}{!} {
\begin{tabular}{|c|r|rr|rr|r||rr|rr|r|}
\cline{3-12}
\multicolumn{1}{c}{} & \multicolumn{1}{r|}{} & \multicolumn{5}{c||}{$x = 0$} & \multicolumn{5}{c|}{$x = 2\sqrt{\a}$} \\
\hline
$\a$ & $T$     & $\mui$  & $\Pi_T(\mui)$ & $\tilde{\mu}_T^\star$  & $\Pi_T(\tilde{\mu}_T^\star)$ & \% & $\mui$  & $\Pi_T(\mui)$ & $\tilde{\mu}_T^\star$  & $\Pi_T(\tilde{\mu}_T^\star)$ & \% \\
\hline
\multicolumn{1}{|c|}{\multirow{4}[2]{*}{0.1}} & 1     & 4.162 & 0.620 & 2.688 & 0.536 & 0.136 & 4.162 & 0.682 & 2.688 & 0.536 & 0.214 \\
\multicolumn{1}{|c|}{} & 2     & 4.162 & 0.669 & 3.425 & 0.641 & 0.041 & 4.162 & 0.700 & 3.425 & 0.641 & 0.085 \\
\multicolumn{1}{|c|}{} & 5     & 4.162 & 0.706 & 3.867 & 0.703 & 0.005 & 4.162 & 0.719 & 3.867 & 0.703 & 0.022 \\
\multicolumn{1}{|c|}{} & 10    & 4.162 & 0.719 & 4.015 & 0.719 & 0.001 & 4.162 & 0.726 & 4.015 & 0.719 & 0.010 \\
\hline
\hline
\multicolumn{1}{|c|}{\multirow{4}[2]{*}{1}} & 1     & 2.000 & 2.309 & 0.000 & 0.500 & 0.783 & 2.000 & 3.500 & 0.500 & 2.750 & 0.214 \\
\multicolumn{1}{|c|}{} & 2     & 2.000 & 2.461 & 0.750 & 1.480 & 0.398 & 2.000 & 3.218 & 1.250 & 3.125 & 0.029 \\
\multicolumn{1}{|c|}{} & 5     & 2.000 & 2.675 & 1.500 & 2.400 & 0.103 & 2.000 & 3.043 & 1.700 & 2.968 & 0.025 \\
\multicolumn{1}{|c|}{} & 10    & 2.000 & 2.810 & 1.750 & 2.726 & 0.030 & 2.000 & 3.007 & 1.850 & 2.980 & 0.009 \\
\hline
\hline
\multicolumn{1}{|c|}{\multirow{4}[2]{*}{2}} &1     & 1.707 & 3.744 & 0.000 & 0.500 & 0.866 & 1.707 & 5.889 & 0.000 & 3.328 & 0.435 \\
\multicolumn{1}{|c|}{} &2     & 1.707 & 3.924 & 0.146 & 1.232 & 0.686 & 1.707 & 5.547 & 0.854 & 4.682 & 0.156 \\
\multicolumn{1}{|c|}{} &5     & 1.707 & 4.209 & 1.083 & 3.343 & 0.206 & 1.707 & 5.114 & 1.366 & 4.910 & 0.040 \\
\multicolumn{1}{|c|}{} &10    & 1.707 & 4.424 & 1.395 & 4.108 & 0.071 & 1.707 & 4.945 & 1.536 & 4.868 & 0.016 \\
\hline
\end{tabular}}
\caption{Comparison of costs for the $M/M/1$ queue for steady-state and corrected staffing rules.}
\label{tab:mm1}
\end{table}
%

%%%%%%%%%%%%%%%% 

\subsubsection*{M/{\rm Pareto}/1 queue}

In case the service requirements follow a Pareto distribution with shape parameter $\gamma = 16/5$, the staffing rules become
\begin{equation}
 \mui = \l + \frac{11}{8}\sqrt{\frac{ \l }{3 \a}},\qquad \tilde{\mu}_T^\star = \left[\l + \frac{11}{8}\sqrt{\frac{ \l }{3 \a}} + \frac{1}{T}\left( \frac{2 x^2}{11\sqrt{\l\a/3}} - \frac{11}{8} - \frac{11}{16} \sqrt{3\l\a}\right)\right]^+.
 \end{equation} 
The numerical results are given in Table \ref{tab:mp1}. 
\begin{table}[h!]
\centering\resizebox{15cm}{!} {
\begin{tabular}{|c|r|rr|rr|r||rr|rr|r|}
\cline{3-12}
\multicolumn{1}{c}{} & \multicolumn{1}{r|}{} & \multicolumn{5}{c||}{$x = 0$} & \multicolumn{5}{c|}{$x = 11/4\cdot \sqrt{\a/3}$} \\
\hline
$\a$ & $T$     & $\mui$  & $\Pi_T(\mui)$ & $\tilde{\mu}_T^\star$  & $\Pi_T(\tilde{\mu}_T^\star)$ & \% & $\mui$  & $\Pi_T(\mui)$ & $\tilde{\mu}_T^\star$  & $\Pi_T(\tilde{\mu}_T^\star)$ & \% \\
\hline
\multicolumn{1}{|c|}{\multirow{4}[2]{*}{0.1}} & 1     & 3.510 & 0.524 & 1.759 & 0.461 & 0.120 & 3.510 & 0.573 & 2.010 & 0.562 & 0.019 \\
\multicolumn{1}{|c|}{} & 2     & 3.510 & 0.555 & 2.635 & 0.539 & 0.029 & 3.510 & 0.580 & 2.760 & 0.574 & 0.010 \\
\multicolumn{1}{|c|}{} & 5     & 3.510 & 0.580 & 3.160 & 0.578 & 0.003 & 3.510 & 0.591 & 3.210 & 0.589 & 0.002 \\
\multicolumn{1}{|c|}{} & 10    & 3.510 & 0.590 & 3.335 & 0.590 & 0.000 & 3.510 & 0.596 & 3.360 & 0.595 & 0.001 \\
\hline
\hline
\multicolumn{1}{|c|}{\multirow{4}[2]{*}{1}} & 1     & 1.794 & 2.076 & 0.000 & 0.500 & 0.759 & 1.794 & 2.989 & 0.000 & 2.088 & 0.302 \\
\multicolumn{1}{|c|}{} & 2     & 1.794 & 2.190 & 0.511 & 1.291 & 0.411 & 1.794 & 2.790 & 0.610 &  2.588 & 0.072 \\
\multicolumn{1}{|c|}{} & 5     & 1.794 & 2.345 & 1.281 & 2.108 & 0.101 & 1.794 & 2.638 & 1.320 & 2.607 & 0.012 \\
\multicolumn{1}{|c|}{} & 10    & 1.794 & 2.441 & 1.537 & 2.371 & 0.029 & 1.794 & 2.597 & 1.557 & 2.585 & 0.005 \\
\hline
\hline
\multicolumn{1}{|c|}{\multirow{4}[2]{*}{2}} & 1     & 1.561 & 3.427 & 0.000 & 0.500 & 0.854 & 1.561 & 5.087 & 0.000 & 2.745 & 0.460 \\
\multicolumn{1}{|c|}{} & 2     & 1.561 & 3.567 & 0.032 & 1.050 & 0.706 & 1.561 & 4.832 & 0.172 & 3.417 & 0.293 \\
\multicolumn{1}{|c|}{} & 5     & 1.561 & 3.779 & 0.950 & 3.012 & 0.203 & 1.561 & 4.499 & 1.006 & 4.313 & 0.041 \\
\multicolumn{1}{|c|}{} & 10    & 1.561 & 3.935 & 1.255 & 3.356 & 0.147 & 1.561 & 4.351 & 1.284 & 4.304 & 0.011 \\
\hline
\end{tabular}
}
\caption{Comparison of costs for the $M/{\rm Pareto}/1$ queue for steady-state and corrected staffing rules.}
\label{tab:mp1}
\end{table}
Just as in the results for the $M/M/1$ queue, we observe a higher reduction for larger value of $\a$ and $T$. Also, again $\tilde{\mu}_T < \mui$. Hence, the conclusions for the $M/{\rm Pareto}/1$ queue are similar to those of the $M/M/1$ queue. 

\subsubsection*{Reflected Brownian motion}

In case the input process $U$ is Brownian motion with drift 1 and infinitesimal variance $\s^2$, the steady-state staffing rule and its corrected version reduce to 

\begin{equation}
\mui = \l + \sqrt{\frac{\l\s^2}{2\a}}, \qquad  
\tilde{\mu}_T^\star = \left[\l + \sqrt{\frac{\l\s^2}{2\a}} + \frac{1}{2\sqrt{2}\,T}\left(\frac{x^2}{\sqrt{\l \a}\s} - 3\s\sqrt{\a\l} \right)\right]^+.
\end{equation}

In Tables \ref{tab:rbm1} and \ref{tab:rbm2}, the costs obtained through numerical evaluation are presented for several values of $x$, $T$. We also vary $\s$ to examine the influence of the volatility of arrival process on the quality of the staffing rules.

The observations on the influence of $\a, x$ and $T$ are similar to those of the $M/M/1$ queue and the $M/{\rm Pareto}/1$ queue. 
However, here we see little improvement induced by the corrected staffing rule for small values of $\a$ for both values of $x$. 
The results in Tables \ref{tab:rbm1}-\ref{tab:rbm2} also suggest that the reduction is smaller for larger values of $\s$. 

\begin{table}
\centering
\resizebox{15cm}{!} {
\begin{tabular}{|c|r|rr|rr|r||rr|rr|r|}
\cline{3-12}
\multicolumn{1}{c}{} & \multicolumn{1}{r|}{} & \multicolumn{5}{c||}{$x = 0$} & \multicolumn{5}{c|}{$x = \sqrt{2\a} $} \\
\hline
$\a$ & $T$     & $\mui$  & $\Pi_T(\mui)$ & $\tilde{\mu}_T^\star$  & $\Pi_T(\tilde{\mu}_T^\star)$ & \% & $\mui$  & $\Pi_T(\mui)$ & $\tilde{\mu}_T^\star$  & $\Pi_T(\tilde{\mu}_T^\star)$ & \% \\
\hline
\multicolumn{1}{|c|}{\multirow{4}[2]{*}{0.1}} & 1     & 3.236 & 0.525 & 2.901 & 0.518 & 0.013 & 3.236 & 0.565 & 3.124 & 0.564 & 0.001 \\
\multicolumn{1}{|c|}{} & 2     & 3.236 & 0.536 & 3.068 & 0.534 & 0.003 & 3.236 & 0.556 & 3.180 & 0.556 & 0.000 \\
\multicolumn{1}{|c|}{} & 5     & 3.236 & 0.543 & 3.169 & 0.542 & 0.000 & 3.236 & 0.551 & 3.214 & 0.551 & 0.000 \\
\multicolumn{1}{|c|}{} & 10    & 3.236 & 0.545 & 3.203 & 0.545 & 0.000 & 3.236 & 0.549 & 3.225 & 0.549 & 0.000 \\
\hline
\hline
\multicolumn{1}{|c|}{\multirow{4}[2]{*}{1}} & 1     & 1.500 & 3.420 & 0.000 & 0.833 & 0.756 & 1.500 & 4.741 & 1.000 & 3.984 & 0.160 \\
\multicolumn{1}{|c|}{} & 2     & 1.500 & 3.539 & 0.750 & 2.386 & 0.326 & 1.500 & 4.579 & 1.250 & 4.293 & 0.063 \\
\multicolumn{1}{|c|}{} & 5     & 1.500 & 3.707 & 1.200 & 3.363 & 0.093 & 1.500 & 4.335 & 1.400 & 4.274 & 0.014 \\
\multicolumn{1}{|c|}{} & 10    & 1.500 & 3.820 & 1.350 & 3.705 & 0.030 & 1.500 & 4.190 & 1.450 & 4.175 & 0.004 \\
\hline
\hline
\multicolumn{1}{|c|}{\multirow{4}[2]{*}{2}} & 1     & 1.500 & 3.420 & 0.000 & 0.833 & 0.756 & 1.500 & 4.741 & 1.000 & 3.984 & 0.160 \\
\multicolumn{1}{|c|}{} & 2     & 1.500 & 3.539 & 0.750 & 2.386 & 0.326 & 1.500 & 4.579 & 1.250 & 4.293 & 0.063 \\
\multicolumn{1}{|c|}{} & 5     & 1.500 & 3.707 & 1.200 & 3.363 & 0.093 & 1.500 & 4.335 & 1.400 & 4.274 & 0.014 \\
\multicolumn{1}{|c|}{} & 10    & 1.500 & 3.820 & 1.350 & 3.705 & 0.030 & 1.500 & 4.190 & 1.450 & 4.175 & 0.004 \\
\hline
\end{tabular}}
\caption{Comparison of costs for RBM with $\sigma = 1$ for steady-state and corrected staffing rules}
\label{tab:rbm1}
\end{table}

%%%%%%%% SIGMA = 2

\begin{table}
\centering
\resizebox{15cm}{!} {
\begin{tabular}{|c|r|rr|rr|r||rr|rr|r|}
\cline{3-12}
\multicolumn{1}{c}{} & \multicolumn{1}{r|}{} & \multicolumn{5}{c||}{$x = 0$} & \multicolumn{5}{c|}{$x = 2\sqrt{2\a} $} \\
\hline
$\a$ & $T$     & $\mui$  & $\Pi_T(\mui)$ & $\tilde{\mu}_T^\star$  & $\Pi_T(\tilde{\mu}_T^\star)$ & \% & $\mui$  & $\Pi_T(\mui)$ & $\tilde{\mu}_T^\star$  & $\Pi_T(\tilde{\mu}_T^\star)$ & \% \\
\hline
\multicolumn{1}{|c|}{\multirow{4}[2]{*}{0.1}} & 1     & 5.472 & 0.950 & 4.801 & 0.936 & 0.015 & 5.472 & 1.030 & 5.249 & 1.029 & 0.001 \\
\multicolumn{1}{|c|}{} & 2     & 5.472 & 0.972 & 5.137 & 0.968 & 0.003 & 5.472 & 1.012 & 5.360 & 1.012 & 0.000 \\
\multicolumn{1}{|c|}{} & 5     & 5.472 & 0.985 & 5.338 & 0.985 & 0.000 & 5.472 & 1.002 & 5.427 & 1.002 & 0.000 \\
\multicolumn{1}{|c|}{} & 10    & 5.472 & 0.990 & 5.405 & 0.990 & 0.000 & 5.472 & 0.998 & 5.450 & 0.998 & 0.000 \\
\hline
\hline
\multicolumn{1}{|c|}{\multirow{4}[2]{*}{1}} & 1     & 2.414 & 3.176 & 0.293 & 1.546 & 0.513 & 2.414 & 4.633 & 1.707 & 4.228 & 0.087 \\
\multicolumn{1}{|c|}{} & 2     & 2.414 & 3.356 & 1.354 & 2.690 & 0.199 & 2.414 & 4.375 & 2.061 & 4.247 & 0.029 \\
\multicolumn{1}{|c|}{} & 5     & 2.414 & 3.573 & 1.990 & 3.411 & 0.045 & 2.414 & 4.094 & 2.273 & 4.073 & 0.005 \\
\multicolumn{1}{|c|}{} & 10    & 2.414 & 3.689 & 2.202 & 3.646 & 0.012 & 2.414 & 3.966 & 2.344 & 3.962 & 0.001 \\
\hline
\hline
\multicolumn{1}{|c|}{\multirow{4}[2]{*}{2}} & 1     & 2.000 & 4.839 & 0.000 & 1.339 & 0.723 & 2.000 & 7.481 & 1.000 & 5.967 & 0.202 \\
\multicolumn{1}{|c|}{} & 2     & 2.000 & 5.078 & 0.500 & 2.773 & 0.454 & 2.000 & 7.158 & 1.500 & 6.585 & 0.080 \\
\multicolumn{1}{|c|}{} & 5     & 2.000 & 5.414 & 1.400 & 4.726 & 0.127 & 2.000 & 6.670 & 1.800 & 6.549 & 0.018 \\
\multicolumn{1}{|c|}{} & 10    & 2.000 & 5.639 & 1.700 & 5.409 & 0.041 & 2.000 & 6.380 & 1.900 & 6.349 & 0.005 \\
\hline
\end{tabular}}
\caption{Comparison of costs for RBM with $\sigma = 2$ for steady-state and corrected staffing rules.}
\label{tab:rbm2}
\end{table}

\subsection{Discussion}

Based upon these numerical results in Tables \ref{tab:mm1}-\ref{tab:rbm2}, we make a few remarks. The three models roughly exhibit similar behavior as $T$, $x$ and $\a$ are varied.

Non-surprisingly, we note that $\tilde{\mu}_T$ approaches $\mui$ with increasing $T$, which also implies that the cost reduction achieved by the corrected staffing rule vanishes as $T\to\iy$. 
Also, we observe that in all scenarios examined, the cost reduction increases with $\a$. This can be explained through investigation of the objective function $\Pi_T$ as function of $\m$. Namely, for $\a$ small, the curve is relatively flat around the true optimum $\muT$. Hence, in this case a moderate deviation from $\muT$ will likely not lead to a significant cost increase. However, as $\a$ becomes larger, i.e. the server efficiency is valued more than minimization of congestion, the curve becomes more sharp around $\muT$, and hence more accurate approximations of $\muT$ are required to achieve an acceptable cost level. Hence, the corrected staffing rule \eqref{eq:correctedMu} proves particularly useful in these cases.

Another point we want to highlight is that the relative improvement is higher for $x=0$, as opposed to $x=\sqrt{2\a\l u_2}$. Moreover, even though the initial state of the system is above the optimal equilibrium, $\tilde{\mu}_T$ is smaller than $\mui$. This is somewhat counter-intuitive. In fact, from \eqref{eq:muBullet} it follows that $\mu_\bullet$ positively contributes to the corrected staffing function if
\begin{equation}
\E[Q^2(0)] >  3\a\l u_2 + \frac{2 u_2}{3 u_3}\,\sqrt{2\a\l u_2}.
\end{equation}
Even more surprisingly, obverse that if $Q(0)\equiv Q(\iy)$ with $\mu \equiv \mui = \l + \sqrt{\l u_2/(2\a)}$ we get with Lemma \ref{lemma:workloadmoments}
\begin{equation}
\E[Q^2(0)] = \a\l u_2 + \frac{u_3}{3} \sqrt{\frac{2\a\l}{u_2}},
\end{equation}
so that
\begin{equation}
\mu_\bullet = {-}\frac{u_3}{2 u_2} - \sqrt{2\a\l u_2} < 0.
\end{equation}
This suggests that even when the process is started in equilibrium with the corresponding optimal steady-state speed $\mui$, it is more cost efficient to change the server speed. This seems strange, but we provide an explanation for this phenomenon. 
In out particular setting, we strictly focus on the period $[0,T]$, and do not care about what happens after time $T$. Hence, it might be beneficial to let the queue build up towards the end of the period, thereby employing a smaller server speed than stipulated by the steady-state optimum. 
Naturally, this effect diminishes with $T$.
\section{Conclusion \& further research}

Motivated by the time-varying nature of queues in practical applications, we studied the impact that the transient phase has on traditional capacity allocation questions.
By defining a cost minimization problem, in which the objective function contains a correction accounting for the transient period, we identified the leading and second-order behavior of the cost function as a function of the interval length $T$. 
As a by-product, this result provides an approximation for the actual cost function, which is a refinement to its stationary counterpart.
Our numerical experiments in Section 5.1 demonstrate the improved accuracy achieved by this approximation in a number of settings. 
By perturbation analysis of the optimization problem, this furthermore gives rise to a correction to the steady-state optimal capacity allocation of order $1/T$. 
The necessity of the refined capacity allocation level is substantiated by the numerics in Section 5.2, which show the cost reduction that can be achieved in the number of settings, compared to settings in which stationary metrics are used.  
Especially for small values of $T$ and large values of $\alpha$ this reduction is significant.
Additionally, these results also indicate that it is relatively safe to use the stationary cost when $T$ is moderate, or $\alpha$ is small. 
The latter reflects the scenario in which QoS to clients is much more valued than service efficiency. 
This observation links to the flat nature of the cost function around its optimal value for $\alpha$ small, a statement on the optimality gap that we formally proved in Proposition 4.

Besides the validation of our theoretical results of Sections 3 and 4, the numerical results also reveal some phenomena that require more investigation. 

As noted, our corrected capacity allocation level $\tilde{\mu}_T^\star$ is in most studied cases less than the steady-state optimal value $\mu_{\iy}^\star$. This implies that congestion levels tends to be higher under our staffing scheme then under stationary staffing. 
A possible explanation for this may be the fact that the planning period under consideration is finite. 
Clearly, in the setting we analyzed, anything that happens after time $T$ is neglected. 
Therefore, it might be beneficial from the cost perspective to end the period with a higher expected congestion level, as it does not need to be canceled out in the future. 
Related to this observation, it would be interesting to look at the setting in which staffing decisions need to be made in consecutive periods of equal length, in which the arrival rate changes at the start of each period. 
This case requires careful consideration of the correlation among the staffing decisions within the separate periods.

Another question that arises concerns the translation of our (qualitative) findings to more general queues, in particular the $M/M/s$ queue. 
Whereas in our analysis, the central decision variable is the server speed $\mu$, the variable of interest in multi-server queues is typically the number of servers. 
It may well be that similar explicit corrections to staffing levels can be deduced to account for transience.
Since our analysis heavily relies on the comparibility of the sample paths of two single-server queues, which is due to the equal negative drift for the two processes, another approach must be taken to tackle this extension.

The analysis and findings for the single-server queue with L\'evy input presented in this paper may serve a stepping stone for investigation of these more elaborate problems.

\bibliographystyle{plain}
\bibliography{bibliography}

\appendix

\section{Proofs of Section 2}

\subsection{Proof of Lemma \ref{lemma:workloadmoments}}

\begin{proof}
The conditions of \cite[Cor.IX3.4]{Asmussen2003} are satisfied and therefore $Q_{\mu}(t)\Rightarrow \Qlm(\infty)$ in distribution for $t\rightarrow\infty$. Furthermore, its Laplace transform is for ${\rm Re}(s) < 0$
\[\tilde{Q}_{\mu}(s) = \E[s \Qlm(\infty)] = \frac{s \k_{\mu}'(0)}{\k_{\mu}(s)} = \frac{s(\l\k_U'(0) - \mu)}{\l\k_U(s) - \mu s} = \frac{s(\mu-\l)}{\mu s-\l \k_U(s)}.\]
It can be checked that $\k_U'(0) = \E[U(1)] = 1$, $\k_U''(0) = u_2$ and $\k_U'''(0) = u_3$, and $\klm'(0) = \l-\mu$, $\klm''(0) = \l u_2$ and $\klm'''(0) = \l u_3$.
Using l'H\^opital's rule we obtain the first moment of $\Qlm(\infty)$:
\begin{align*}
\E[\Qlm(\iy)] &= {-}\lim_{s\to 0} \frac{d}{ds} \tilde{Q}_{\mu}(s) = \lim_{s\to 0} \klm'(0)\, \frac{s\klm'(s)-\klm(s)}{\klm(s)^2}\\
&= \lim_{s\to 0} \klm'(0)\, \frac{{-}s\klm''(s)}{2\klm(s)\klm'(s)} 
= \lim_{s\to 0} \klm'(0)\,\frac{ s\klm'''(s)-\klm''(s)}{2\klm'(s)^2 + 2\klm(s)\klm'''(s)} \\
&= {-}\frac{\klm''(0)}{2\klm'(0)} = \frac{\l u_2}{2(\mu-\l)}.
\end{align*}
Similarly we derive the second moment:
\begin{align*}
\E[\Qlm^2(\iy)] &= \lim_{s\to 0} \frac{d^2}{ds^2} \tilde{Q}_{\mu}(s) 
= \lim_{s\to 0} \klm'(0)\, \frac{3 \klm''(0)^2 - 2\klm'(0)\klm'''(0)}{6 \klm'(0)^3}\\
&= (\l-\mu)\frac{3\l^2u_2^2 - 2\l u_3(\l-\mu)}{6(\l-\mu)^3}
= \frac{\l^2u_2^2}{2(\mu-\l)^2} +\frac{\l u_3}{3(\mu-\l)}.
\end{align*}
\end{proof}

\section{Proofs of Section 3}
\subsection{Proof of Lemma 2}
\begin{proof}
Using the representation in \eqref{eq:Yxy} we write
\begin{align*}
\Psi^{x,y}_T &= \frac{1}{T}\int_0^{\infty} \E[Y_{x,y}(t)]\dd t \\
&= \frac{1}{T}\,\E\left[\int_0^{\t^x(0)}Y_{x,y}(t)\right] \dd t + \frac{1}{T}\,\E\left[\int_0^{\t^y(0)}Y_{x,y}(t) \dd t\right] + \frac{1}{T}\,\E\left[\int_{\t^y(0)}^{\t^x(0)} Y_{x,y}(t) \dd t\right],\\
&= \frac{1}{T}\,\E\left[\int_0^{\t^y(0)}(x-y) \dd t\right] + \frac{1}{T}\,\E\left[\int_{\t^y(0)}^{\t^x(0)} Y_{x,y}(t) \dd t\right] \\
&= \frac{1}{T}\,\E[\t^y(0)](x-y) + \frac{1}{T}\,\E\left[\int_{\t^y(0)}^{\t^x(0)} Y_{x,y}(t) \dd t\right].
\end{align*}
By \eqref{eq:Yxy} and the Strong Markov property holding for L\'evy processes \cite{Asmussen2003}, observe that \\* $Y_{x,y}(t) \ed Y_{x-y,0}(\tau^y(0)+t)$, whereby
\begin{equation}
\frac{1}{T}\,\E\left[\int_{\t^y(0)}^{\t^x(0)} Y_{x,y}(t)\,\dd t\right] = \frac{1}{T}\,\E\left[\int_{0}^{\t^{x-y}(0)} Y_{x-y,0}(t) \dd t\right] = \Psi^{x-y,0}_T,
\end{equation}
which completes the proof.
\end{proof}

\subsection{Proof of Lemma 3}
\begin{proof}
Note that $Y^{z,0}(t)$ and $\tau^z(y)$ are intimately related. Namely, due to the fact that $X$ has no negative jumps 
\begin{equation}
\{ \tau^z(y) \leq t\} = \{Y_{z,0}(t) \leq y \}.
\end{equation}
In fact, $Y_{z,0}(\tau^z(y)) = y$, which implies that $\tau_z$ is a right inverse for $Y_{z,0}(t)$. Therefore, the following equality holds
\begin{equation}
\int_0^{\tau^z(0)} Y_{z,0}(t)\, \dd t = \int_0^z \tau^z(y)\, \dd y,
\end{equation}
which implies
\begin{equation}
\Psi^{z,0}_T = \frac{1}{T}\,\int_0^z \E[\tau^z(y)]\, \dd y = \frac{1}{T}\,\int_0^z \E[\tau^{z-y}(0)]\,\dd y =\frac{1}{T}\, \int_0^z \E[\tau^{y}(0)] \,\dd y.
\end{equation}
\end{proof}

\subsection{Proof of Corollary 1}
\begin{proof}
From \eqref{eq:invCharExp},
\begin{equation}\label{eq:corEq1}
\E[\hat{\t}^0(y)] = -\tfrac{\dd}{\dd u} \left. \E[\exp(-u\,\hat{\t}^0(y))]\right|_{u=0} = {-}y\left.\frac{\dd}{\dd u} \Upsilon^{-1}(u)\right|_{u=0}.
\end{equation}
Since $\Upsilon(u)$ is strictly increasing and $\Upsilon(0)=0$, we get $\Upsilon^{-1}(0)$ and
\begin{equation}
\left.\tfrac{\dd}{\dd u}\Upsilon^{-1}(u)\right|_{u=0} = \frac{1}{\Upsilon'(\Upsilon^{-1}(0))} = \{ \Upsilon'(0) \}^{-1}.
\end{equation}
Furthermore,
\begin{equation}
\Upsilon'(\th) = -a+ \s^2\th + \int_{-\infty}^0 (x\, e^{\th x} - x{\bf 1}_{[-1,0)}(x)) \hat{\nu}(\dd x) =  -a + \s^2\th - \int_0^\infty (y\, e^{-\th y} - y{\rm 1}_{(0,1]}(y)) \nu(\dd y).
\end{equation}
Thus, $\Upsilon'(0) = -\E[X(1)] = \mu-\l$ and $\E[\hat{\t}^0(y)] = -y/(\mu-\l)$. By \eqref{eq:transformedTau} and \eqref{eq:H(x,0)}, we deduce that
\begin{equation}
\Psi^{z,0}_T = \frac{1}{T}\, \int_0^z \E[\tau^z(0)] \,\dd y = \frac{1}{T}\, \int_0^z \E[\hat{\t}_0^y] \dd y = \frac{x^2}{2T(\mu-\l)}.
\end{equation}
For $x,y\geq 0$, we use Lemma 2 to conclude
\begin{equation}
\Psi^{x,y}_T =  \frac{-y(x-y)}{T(\mu-\l)} + \frac{-(x-y)^2}{2 T(\mu-\l)} = \frac{x^2-y^2}{2T(\mu-\l)}  .
\end{equation}
\end{proof}

\subsection{Proof of Proposition 1}

\begin{proof}
To derive the upper bound for $\Delta^{x,y}_T$, we apply the same coupling argument as in described in Section 3. Let us assume without loss of generality $x>y$. 
In this case, 
\begin{equation}
|\Delta^{x,y}_T| = \frac{1}{T} \int_T^\iy \E[Q^x(t)-Q^y(t)]\, \dd t \leq \frac{1}{T}\int_T^\iy \E[Q^x(t)-Q^0(t)]\, \dd t.
\end{equation}
By the decomposition in \eqref{eq:Yxy},
\begin{align*}
 \int_T^\infty \E[Q_x(t) - Q_0(t)]\, \dd t
	&= \int_T^\infty \E[(x+\inf_{s\leq t} X(s))\textbf{1}_{\{\tau^x(0)>t\}}] \, \dd t\\
	&= \int_T^\infty \int_0^x \P( x-u + \inf_{s\leq t}X(s) > 0) \, \dd u\, \dd t\\
	&= \int_T^\infty \int_0^x \P( \tau^{x-u}(0) > t ) \,\dd u\,\dd t \leq \int_T^\iy \int_0^x \frac{\E[\tau^{x-u}(0)^2]}{t^2}\,\dd u \,\dd t\\
	&=  \int_0^x \int_T^\iy \frac{\E[\tau^{x-u}(0)^2]}{t^2}\,\dd t \,\dd u
	= \int_0^x \frac{\E[\tau^{v}(0)^2]}{T}\,\dd v.
\end{align*}

We obtain $\E[\hat{\tau}_v^2]$ with the help of its Laplace transform in \eqref{eq:invCharExp}. Namely,
\begin{equation}
\E[\tau^v(0)^2] = \left.\tfrac{\dd^2}{\dd u^2}\E[\exp(-u \tau^v(0))]\right|_{u=0} = v^2\,\left(\left.\tfrac{\dd}{\dd u}\Upsilon^{-1}(u)\right|_{u=0}\right)^2 - v\left. \tfrac{\dd^2}{\dd u^2}\Upsilon^{-1}(u)\right|_{u=0}
\end{equation}
As in the previous subsection we have $\left.\tfrac{\dd}{\dd u}\Upsilon^{-1}(u)\right|_{u=0} = \mu-\l$, and 
\begin{equation}
\left.\tfrac{\dd^2}{\dd u^2}\Upsilon^{-1}(u)\right|_{u=0} = {-}\frac{\Upsilon''(\Upsilon^{-1}(0))}{\Upsilon'(\Upsilon^{-1}(0))^3} = {-}\frac{\Upsilon''(0)}{\Upsilon'(0)^3}.
\end{equation}
Since $\Upsilon'(0) = 1/(\mu-\l)$ and 
\begin{equation}
\Upsilon''(0) = \s^2 + \int_0^\infty x^2\,\nu(\dd x) = u_2,
\end{equation}
we conclude
\begin{equation}
\E[\tau^v(0)^2] = \frac{v^2}{(\m-\l)^2} + \frac{u_2v}{(\m-\l)^3},
\end{equation}
so that
\begin{equation}\label{eq:delta_upper}
|\Delta^{x,y}_T| \leq \frac{1}{T^2}\int_0^x \frac{v^2}{(\m-\l)^2} + \frac{u_2v}{(\m-\l)^3} \dd v = \frac{1}{T^2}\left(\frac{x^3}{3(\mu-\l)^2}+\frac{u_2 x^2}{2(\m-\l)^3}\right).
\end{equation}
For general $x,y\geq 0$, 
\begin{equation}
|\Delta^{x,y}_T| \leq \frac{1}{T^2}\left(\frac{\max(y,x)^3}{3(\mu-\l)^2}+\frac{u_2 \max(y,x)^2}{2(\m-\l)^3}\right).
\end{equation}
As a direct consequence,
\begin{equation}
|\Delta_T| \leq \frac{1}{T^2}\left(\frac{\E[\max(Q(0),Q_\m(\iy))^3]}{3(\mu-\l)^2}+\frac{u_2 \E[\max(Q(0),Q_\m(\iy))^2]}{2(\m-\l)^3}\right).
\end{equation}

\end{proof}

\section{Proofs of Section 4}
\subsection{Proof of Proposition 2}
In the proof of the proposition, we use the following auxiliary lemma, of which we include the proof for completeness.
\begin{lemma}\label{lemma:minimizerConvergence}
Consider the sequence of functions $f_n:\, [x_0,\infty) \to \mathbb{R}$ and let $f: [x_0,\infty) \to\mathbb{R}$ be the pointwise limit for some $x_0\in \mathbb{R}$. 
Assume $f$ and $f_n$ are strictly convex for all $n$. 
Furthermore, let $f(y) \to \infty$ for both $y\to x_0^+$ and $y\to \infty$.  
If $x_n$ and $x$ are the minimizers for $f_n$ and $f$, respectively, then $x_n\to x$ for $n\to\infty$.  
\end{lemma}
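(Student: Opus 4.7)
My plan is to follow the standard three-step template for convergence of minimizers of strictly convex functions: (1) show that the sequence $\{x_n\}$ eventually lies in a fixed compact subinterval of $(x_0,\infty)$; (2) argue that every convergent subsequence has limit equal to $x$; (3) conclude by the subsequence principle.

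For step (1), the key observation is that since $f$ is coercive on $[x_0,\infty)$, I can pick points $a,b$ with $x_0 < a < x < b$ and $f(a),f(b)>f(x)$, and set $\e := \tfrac{1}{3}\min\{f(a)-f(x),\,f(b)-f(x)\}>0$. Pointwise convergence of $f_n$ to $f$ at the three points $a,x,b$ then yields, for all $n$ large enough, the inequalities $f_n(a)>f_n(x)$ and $f_n(b)>f_n(x)$. On the other hand, strict convexity forces each $f_n$ to be strictly decreasing on $[x_0,x_n]$ and strictly increasing on $[x_n,\infty)$. If $x_n$ were less than $a$, then $a$ and $x$ would both sit in the increasing branch of $f_n$ and we would have $f_n(a)<f_n(x)$, a contradiction; an analogous argument rules out $x_n>b$. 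Hence $x_n\in[a,b]$ for all sufficiently large $n$.

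For step (2), take any convergent subsequence $x_{n_k}\to x^\star\in[a,b]$. I will invoke the classical fact that pointwise convergence of a sequence of convex functions is automatically uniform on compact subsets of the interior of the common domain, so that $f_{n_k}\to f$ uniformly on $[a,b]$. Combined with continuity of $f$ on $(x_0,\infty)$, this yields $f_{n_k}(x_{n_k})\to f(x^\star)$. For arbitrary $y\in[x_0,\infty)$, the minimizing property $f_{n_k}(x_{n_k})\le f_{n_k}(y)$ can then be passed through the limit (using $f_{n_k}(y)\to f(y)$ from pointwise convergence) to obtain $f(x^\star)\le f(y)$. Strict convexity of $f$ yields a unique minimizer, so $x^\star=x$.

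Finally, step (3) is immediate: the sequence $\{x_n\}$ is bounded (lying in $[a,b]$ eventually), and since every convergent subsequence has limit $x$, the whole sequence converges to $x$. The most delicate point of the plan is step (1); once the minimizers are trapped in a compact interval in the interior of the domain, the rest is a routine application of uniform convergence of convex functions on compacta together with uniqueness of the limit minimizer from strict convexity of $f$.
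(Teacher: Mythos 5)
Your proposal is correct and follows essentially the same route as the paper's proof: trap the minimizers $x_n$ in a compact subinterval $[a,b]\subset(x_0,\infty)$ using pointwise convergence together with the monotonicity of strictly convex functions on either side of their minimizer, then upgrade to uniform convergence on that compact set to identify every subsequential limit with the unique minimizer $x$, and conclude by the subsequence principle. The only differences are cosmetic (you compare values at the three points $a,x,b$ where the paper checks increase/decrease of $f_n$ at $u_r$ and $u_l$ via difference quotients, and you pass the minimizing inequality directly to the limit where the paper argues by contradiction with a $\limsup$), and your explicit appeal to the classical fact that pointwise convergence of convex functions is uniform on compacta is actually a cleaner justification than the one given in the paper.
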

\begin{proof}
We start by showing that the sequence $x_n$ is bounded. Fix $u_l, u_r$ such that $x_0<u_l < x < u_r$. We claim that there exists a $N\in\mathbb{N}$ such that $x_n\in[u_l,u_r]$ for all $n \geq N$. First, we prove the upper bound on $x_n$. For any strictly convex function $h$ with minimizer $x_h$, the following statement holds true:
\begin{equation}\label{eq100}
x_h < u_r \quad \Leftrightarrow \quad h \text{ is strictly increasing at } u_r.
\end{equation}
The first implication follows from observing that $h(x^*) < h(y)$ for all $y> x^*$ and definition of convexity:
\[ 0<\frac{h(u_r)-h(x_h)}{u_r-x_h} \leq \frac{h(u_r+\d)-h(u_r)}{\d}, \]
for all $\d>0$. So that $h(u_r)<h(u_r+\d)$, i.e. $h$ is increasing at $u_r$. The converse follows immediately by observing that $h(u_r) < h(u_r+\d)$ for all $\d>0$, so that $x_h < u_r$. 
Next, we show that $f_n$ must be increasing at $u_r$ for $n$ sufficiently large. By pointwise convergence of $f_n$ we have
\[ \lim_{n\to\infty} [f_n(u_r+\d) - f_n(u_r)] = f(u_r+\d) - f(u_r).\]
Let $w_r:= f(u_r+\d) - f(u_r)>0$. Then 
\[ \exists N_r \in \mathbb{N}:\, \forall n\geq N_r:\, |[f_n(u_r+\d) - f_n(u_r)] - [f(u_r+\d)-f(u_r)] | < w_r/2.\]
Hence for $n\geq N_r$,
\[f(u_r+\d)-f(u_r) - w_r/2 <  f_n(u_r+\d) - f_n(u_r) < f(u_r+\d)-f(u_r) + w_r/2\]
\[\Rightarrow 0 < w_r/2 < f_n(u_r+\d) - f_n(u_r).
\]
Hence by \eqref{eq100}, $x_n < u_r$ for sufficiently large $n$. Similarly, we argue
\begin{equation}
x_h > u_l \quad \Leftrightarrow \quad h \text{ is strictly decreasing at } u_l,
\end{equation}
for any strictly convex function $h$ with minimizer $x_h$. Note that $x_h > u_l$ implies $h(x_h) - h(u_l) < 0$ and for all $\d>0$ we get by strict convexity
\[\frac{h(u_l)-h(u_l-\d)}{\d} < \frac{h(x_h)-h(u_l)}{x_h-u_l} < 0,\]
by which $h(u_l-\d)>h(u_l)$, i.e. $h$ is decreasing in $u_l$. Moreover, if $h$ is decreasing at $u_l$, then it is decreasing for all $y < u_l$, by arguments similar to the above. Therefore, $h(u_l-\d)> h(u_l)$ for all $\d>0$ and it must hold that $x_h>u_l$. Define $f(u_l) - f(u_l-\d) :=w_l < 0$, then again by pointwise convergence, we have that
\[ \exists N_l \in \mathbb{N}:\, \forall n\geq N_l:\, |[f_n(u_l) - f_n(u_l-\d)] - [f(u_l)-f(u_l-\d)] | < w_l,\]
whereupon
\[ f_n(u_l) - f_n(u_l-\d) < f(u_l) - f(u_l-\d) + w_l = 2w_l < 0.\]
Hence, for sufficiently large $n$, we also have $x_n > u_l$. Fix $N = \max\{N_l,N_r\}$, then for $n\geq N$, $x_n\in( u_l,u_r)$. That is, the sequence $x_n$ is bounded. Therefore, by the theorem of Bolzano-Weierstrass, $x_n$ has to have a convergent subsequence. That is, there exists a sequence $n_k$ such that $n_k \to\infty$ and $x_{n_k}\to a$ as $k\to \infty$ for some $a \in [u_l,u_r]$. 
We prove that every subsequence must converge to $x$ by contradiction. Suppose there exists a subsequence $n_k$ such that $x_{n_k}\to a\neq x$. Since, $x_n\in [u_l,u_r]$ for $n\geq N$, we may restrict our attention on the sequence of functions $\hat{f}_n:[u_l,u_r] \to \mathbb{R^+}$, consisting of the original function $f_n$ restricted to the domain $[u_l,u_r]$. To be precise $x_n = \arg\min_y f_n(y) = \arg\min_y \hat{f}_n(y)$ for $n\geq N$. Because $\hat{f}_n$ and $\hat{f}$ are bounded, we furthermore $\hat{f}_n \to \hat{f}$ uniformly.

Fix $\e>0$. By uniform convergence there exists an $K \in\mathbb{N}$ such that
\[  | \hat{f}_{n_k}( y ) - \hat{f}( y)| < \e /2,\quad \forall k\geq K_0,\ y \in[u_l,u_r].\]
Also, because $\hat{f}$ is convex, it is continuous, so that there exists a $\d := \d(\e)$ so that
\[ |z-y| < \d \quad \Rightarrow \quad |\hat{f}(z) - \hat{f}(y)| < \e/2.\]
Let $K_1$ be such that $|x_{n_k}-a| < \d$ for all $k\geq K_1$. Then for $k \geq K= \max\{K_0,k_1\}$ this implies. 
\begin{align*}
|f_{n_k}(x_{n_k}) - f(a)| &= |\hat{f}_{n_k}(x_{n_k}) - \hat{f}(a)| \\
&\leq  |\hat{f}_{n_k}(x_{n_k}) - \hat{f}(x_{n_k}) + | \hat{f}(x_{n_k}) - f(a)| < \e/2 + \e/2 = \e.
\end{align*} 
Hence we conclude $\lim_{k\to\infty} \hat{f}_{n_k}(x_{n_k}) = f(a)$. 
Therefore, 
\[ \limsup_{n\to \infty} f_n(x_n) \geq f(a) > f(x),\]
by minimality of $x$. However, $f_n(x_n) \leq f_n(x)$, which implies $\limsup_{n\to\infty} f_n(x_n) \leq \lim_{n\to\infty} f_n(x) = f(x)$, contradicting the strict inequality above. Hence we deduce $x=a$. Consequently, every subsequence of $x_n$ converges to $x$ and therefore $x_n\to x$ as $n\to \infty$.
\end{proof}

Since pointwise convergence is trivial, it remains to be proven that $\Pi_T(\mu)$ is strictly convex.
Since the term $\a\mu$ is convex, the strictness should come from the first term. 
Furthermore, observe that if a function $f_\mu(t)$ is convex for all $t\geq 0$, and strictly convex for all $t\geq\e$ for some $\e\in[0,T)$, i.e. for any $\mu_1,\mu_2>0$ and $a\in (0,1)$
\begin{equation}
a\, f_{\mu_1}(t) + (1-a) f_{\mu_2}(t) > f_{a\mu_1+(1-a)\mu_2}(t)
\end{equation}
then
\begin{equation}
a\int_0^T \, f_{\mu_1}(t)\, \dd t + (1-a)\int_0^T f_{\mu_2}(t) \dd t =
\int_0^T \, a f_{\mu_1}(t) + (1-a)f_{\mu_2}(t) \dd t 
\end{equation}
\begin{equation}
=  \int_0^\e a f_{\mu_1}(t) + (1-a)f_{\mu_2}(t) \dd t  + \int_\e^T \, a f_{\mu_1}(t) + (1-a)f_{\mu_2}(t) \dd t 
\end{equation}
\begin{equation}
> \int_0^\e  f_{a\mu_1+(1-a)\mu_2}(t) \, \dd t + \int_\e^T  f_{a\mu_1+(1-a)\mu_2}(t) \, \dd t.
= \int_0^T  f_{a\mu_1+(1-a)\mu_2}(t) \, \dd t.
\end{equation}

Hence, it suffices to prove the convexity of $\E[Q_\mu(t)]$ as a function of $\mu$ for all $t\geq 0$, and strict convexity for $t\geq \e$ for some $\e\in[0,T)$.

Recall the representation of the queue length process $Q_\mu(t)$, given server speed $\mu$ and initial state $Q(0)=x$:
\begin{align}
Q_\mu(t) &= U(t)-\mu t + \max\left\{ x , -\inf_{s\leq t} [ U(s)-\mu s] \right\}\\
&=
\left\{
\begin{array}{ll}
x+U(t)-\mu t, & \text{if } t<\tau(x,\mu),\\
U(t)-\mu t  -\inf_{s\leq t} [ U(s)-\mu s], & \text{if } t\geq \tau(x,\mu) ,
\end{array}\right.\label{eq:Qrep}
\end{align}
where
\begin{equation}
 \tau(x,\mu) := \inf\{ t \geq 0\,:\, x+U(t)-\mu t \leq 0\}
 \end{equation} 
and $U(t)$ is a spectrally positive L\'evy process.
Fix $\mu_1, \mu_2>0$ and $a\in(0,1)$. Define $\mu_3 := a\mu_1+(1-a)\mu_2$, and
\begin{equation}
D(t) := a Q_{\mu_1}(t) + (1-a) Q_{\mu_2}(t) - Q_{\mu_3}(t).
\end{equation}
In order to prove strict convexity we have show that $D(t) \geq 0$ for all $t\geq 0$, thereby implying $\E D(t) \geq 0$, i.e. convexity, for all $t\geq 0$, and $D(t)>0$ with positive probability for $t\in[\e,T]$, for some $\e \in[0,T)$. 

We distinguish two cases: $x>0$ and $x=0$. \\
\\*
\textbf{Case $x>0$.}
We start by noticing that if $Q_{\mu_1}$, $Q_{\mu_2}$ and $Q_{\mu_3}$ experience the same input process $U(t)$, then by absence of negative jumps in $U(t)$, it holds that
\begin{equation}\label{eq:stochDom}
\tau(x,\mu_2) < \tau(x,\mu_3) < \tau(x,\mu_1).
\end{equation}
We use shorthand notation 
\begin{equation}
I_k(t) := \inf_{0\leq s<\leq t}[U(s)-\mu_k s],
\end{equation}
for $k=1,2,3$.
Using representation \eqref{eq:Qrep} of the workload process, we obtain 
\begin{equation}
D(t) = \left\{
\begin{array}{ll}
0,  
& \text{if } t < \tau(x,\mu_2),\\
-(1-a)\left(x+I_2(t) \right), 
& \text{if } \tau(x,\mu_2) \leq t < \tau(x,\mu_3),\\
a x - (1-a)I_2(t) + I_3(t),
& \text{if } \tau(x,\mu_3) \leq t < \tau(x,\mu_1),\\
- a I_1(t) - (1-a) I_2(t)
+ I_3(t),
& \text{if } t \geq \tau(x,\mu_1).
\end{array}
\right.
\end{equation}
This partition of allows us to spot when strict convexity can occur.
Note that by definition $t \geq \tau(x,\mu_2)$, $\inf_{0\leq s<\leq t}[U(s)-\mu_2s] \leq x)=I_2(t)\leq x$, so that $D(t)\geq 0$ if $\tau(x,\mu_2) \leq t < \tau(x,\mu_3)$. 
Moreover, by subadditivity of the infimum, 
\[
I_3=\inf_{0\leq s<\leq t}[U(s)-\mu_3s] = \inf_{0\leq s<\leq t}[a(U(s)-\mu_1s)+(1-a)(U(s)-\mu_2s)] 
\]
\begin{equation}
\geq a \inf_{0\leq s<\leq t}[U(s)-\mu_2s] \leq x) + (1-a) \inf_{0\leq s<\leq t}[U(s)-\mu_2s] \leq x) = a I_1(t) + (1-a) I_2(t),
\end{equation}
and hence $D(t)\geq 0$ for $t \geq \tau(x,\mu_1)$.
Using the same argument, we deduce
\begin{equation}
ax - (1-a)I_2(t) + I_3(t) \geq a x - (1-a) I_2(t) + a I_1(t) + (1-a) I_2(t) = a(x + I_1(t)).
\end{equation}
In particular for $t < \tau(x,\mu_1)$, this value is strictly positive. 
As a result, $D(t)\geq 0$ for all $t\geq 0$.
On top of that $D(t) > 0$ for $t\in[\tau(x,\mu_3),\tau(x,\mu_1))$.
Accordingly, the latter implies strict positivity of $\E D(t)$, and therefore strict convexity of $\E Q_\mu(t)$, if the event $\{\tau(x,\mu_3)\leq t< \tau(x,\mu_1)\}$ occurs with positive probability. 
That is, 
\begin{align}
\P(D(t)>0) &\geq \P\left( a(x+I_1(t))\textbf{1}_{\{\tau(x,\mu_3)\leq t < \tau(x,\mu_1)\}}  > 0 \right)\nonumber\\
&= \P\left( x+ I_1(t) > 0 , \tau(x,\mu_3)\leq t < \tau(x,\mu_1)\right)\nonumber\\
&= \P\left( x+ I_1(t) > 0 | \tau(x,\mu_3)\leq t < \tau(x,\mu_1)\right)\P\left(\tau(x,\mu_3)\leq t < \tau(x,\mu_1)\right)\nonumber\\
&= \P\left(\tau(x,\mu_3)\leq t < \tau(x,\mu_1)\right) = \P(\tau(x,\mu_3)\leq t) - \P(\tau(x,\mu_1) \leq t) > 0, \label{eq:strictConv}
\end{align}
by the stochastic dominance in \eqref{eq:stochDom}. To ensure the strict inequality in \eqref{eq:strictConv} we have to enforce the condition
\begin{equation}\label{eq:condition}
\P(\tau(x,\mu_1)<T) > 0.
\end{equation}
\textit{Remark.}
An example illustrating the need for this condition is the case in which $U(t)$ is a compound Poisson process and $T < x/\mu_2 < x/\mu_1$. Then 
\[Q_{\mu_k}(t) = x + U(t) - \mu_k t,\]
for all $t\in[0,T]$, since $U(t)\geq 0$ and therefore $\tau(x,\mu_1) > T$. Consequently, for all $a\in(0,1)$, 
\[ a\,Q_{\mu_1} + (1-a)\,Q_{\mu_2}(t) = Q_{\mu_3}(t),\]
proving only convexity of $\E Q_{\mu}(t)$ and subsequently $\int_0^T \E[Q_\mu(t)]\,\dd t$. In case $\sigma>0$, the probability in \eqref{eq:condition} is necessary positive.
\\*

\textbf{The case $x=0$.}
By the fact that $\tau(0,\mu) = 0$ for all $\mu>0$. Proving that $D(t)>0$ for in the case $x=0$ reduces to showing that the probability of 
\begin{equation}
D(t) = a I_1(t) + (1-a) I_2(t) - I_3(t)>0
\end{equation}
 happening is positive for all $t>0$. Define 
\begin{equation}
 t_0 := \inf\{ t > 0\, :\, U(t) > 0 \},
 \end{equation} 
and
\begin{equation}
\tilde{\tau}(\mu) := \inf\{ t > t_0\,: U(t) - \mu t \leq 0\}.
\end{equation}
We note that $t_0$ as defined above, also defines the epoch of the start of a new excursion of the reflection $Q_\mu$ for all $\mu>0$. Namely,
\[U(s) \leq 0  \quad \Rightarrow\quad  U(s) - \mu s \leq -\mu s  \qquad \text{for all }0\leq s< t_0\]
\[\Rightarrow \inf_{0\leq s < t_0} [U(s)-\mu s] \leq -\mu t_0 \quad 
\Rightarrow U(t_0) - \mu t_0 -  \inf_{0\leq s < t_0} [U(s)-\mu s] \geq U(t_0) > 0\]
Then $Q_\mu(t_0-) = 0$ for all $\mu>0$. 
By the virtue of the Strong Markov Property, not that $Q_\mu(t_0+t) \ed Q_\mu(t)$. 
Hence we assume without loss of generality $t_0=0$. 
Again, we have a stochastic dominance relation similar to \eqref{eq:stochDom}:
\begin{equation}
\tilde{\tau}(\mu_2) < \tilde{\tau}(\mu_3) < \tilde{\tau}(\mu_1),
\end{equation}
for all $\mu_1<\mu_3<\mu_2$. 
Then 

\begin{equation}
D(t) \ed \left\{
\begin{array}{ll}
0,  
& \text{if } t < \tilde{\tau}(\mu_2),\\
-(1-a)I_2(t), 
& \text{if } \tilde{\tau}(\mu_2) \leq t < \tilde{\tau}(\mu_3),\\
(1-a)I_2(t) + I_3(t),
& \text{if } \tilde{\tau}\mu_3) \leq t < \tilde{\tau}\mu_1),\\
- a I_1(t) - (1-a) I_2(t)
+ I_3(t),
& \text{if } t \geq \tilde{\tau}\mu_1).
\end{array}
\right.
\end{equation}

Clearly, $D(t)\geq 0$ for all $t\geq 0$ and 
\begin{equation}
-(1-a)I_2(t) + I_3(t) \geq a I_1(t) > 0,
\end{equation}
for $\tilde{\tau}(\mu_3) \leq t < \tau(\mu_1)$. 
Hence, in a similar manner to \eqref{eq:strictConv},
\begin{align}
\P(D(t)>0) &\geq \P\left( aI_1(t)\textbf{1}_{\{\tilde{\tau}(\mu_3) \leq t < \tau(\mu_1)\}}  > 0 \right)\nonumber\\
&= \P\left( I_1(t) > 0 , \tilde{\tau}(\mu_3) \leq t < \tau(\mu_1)\right)\nonumber\\
&= \P\left( I_1(t) > 0 | \tilde{\tau}(\mu_3) \leq t < \tau(\mu_1)\right)\P\left(\tilde{\tau}(\mu_3) \leq t < \tau(\mu_1)\right)\nonumber\\
&= \P\left(\tilde{\tau}(\mu_3) \leq t < \tilde{\tau}(\mu_1)\right) = \P(\tilde{\tau}(\mu_3)\leq t) - \P(\tilde{\tau}(x,\mu_1) \leq t) > 0, \label{eq:strictConv2}
\end{align}
The last inequality is satisfied it $\P(\tilde{\tau}(\mu_1) < T) >0$, which is  equivalent to $\P( U(T) - \mu T \leq 0 ) >0$, a condition that is clearly true for all our choice of $U$. 
In conclusion, for $x=0$, $\E D(t) >0$ and therefore $\E Q_\mu(t)$ is a strictly convex function of $\mu$.

\subsection{Proof of Proposition 3}
\begin{proof}
Note that $\Pi_\infty$ is a smooth function. 
By the first optimality condition $\Pi_T'(\muT)= 0$ and $\Pi_\infty'(\mui) = 0$. 
Furthermore, by definition $\Psi_T(\mu) = O(1/T)$ and $\Delta_T(\mu) = O(1/T^2)$.
Hence,
\begin{align*}
0=\Pi_T'(\muT)  
&= \Pi_\infty'(\muT) + \Psi_T'(\muT) + O(1/T^2)\\
&= \Pi_\infty'(\mui) + \Psi_T'(\mui) + (\muT-\mui)\left[ \Pi_\infty''(\mui) + \Psi_T''(\mui) \right] \\
&\qquad + \frac{1}{2}(\mu_T-\mui)^2\left[\Pi_T'''(\xi)+\Psi_T'''(\xi) \right] + O(1/T^2)\\
&= \Psi_T'(\mui) + (\muT-\mui)\left[ \Pi_\infty''(\mui) + \Psi_T''(\mui) \right] \\
&\qquad + \frac{1}{2}(\mu_T-\mui)^2\left[\Pi'''(\xi)+\Psi_T'''(\xi)\right] + O(1/T^2).
\end{align*}
for some $\xi \in [\muT,\mui]$. Rearranging this gives
\begin{align*}
\muT-\mui &= \frac{-\Psi_T'(\mui)}{\Pi_\infty''(\mui)+\Psi_T''(\mui) + \frac{1}{2}(\muT-\mui)(\Pi_\infty'''(\muT)+\Psi_T'''(\xi))} + O(1/T)\\
&= {-}\frac{\Psi_T'(\mui)}{\Pi_\iy''(\mui)} \left[1 - \frac{\Psi_T''(\mu)}{\Pi_\infty''(\mui)} - \frac{1}{2}(\muT-\mu_\infty)\frac{\Pi_\infty'''(\mui)+\Psi_T'''(\mui)}{\Pi_\infty''(\mui)}\right] + O(1/T)\\
&= {-}\frac{\Psi_T'(\mui)}{\Pi_\iy''(\mui)} [1 + o(1)]
\end{align*}
for $T\to\infty$, since both $\mu_T - \mu_\infty$ and $\Psi_T''(\mui)$ are $o(1)$. 
Let
\begin{equation}
\mu_\bullet := \lim_{T\to\iy} \frac{T \Psi_T'(\mui)}{\Pi_\iy''(\mui)}.
\end{equation}
By \eqref{eq:mainResult} we have
\begin{equation}
T \Psi'(\mu) = {-} \frac{\E[Q(0)^2]}{2(\mu-\l)^2} + \frac{\l u_3}{3(\mu-\l)^3} + \frac{3\l^2u_2^2}{4(\mu-\l)^4}.
\end{equation}
Together with 
\begin{equation}
\Pi_\iy''(\mu) = \frac{\l u_2}{(\mu-\l)^3}
\end{equation}
and \eqref{eq:muInf} we obtain the expression for $\mu_\bullet$ in \eqref{eq:muBullet}.
\end{proof}
\subsection{Proof of Proposition 4}\label{sec:proofProp4}
\begin{proof}
We upper bound the optimality gap by using the decomposition in \eqref{eq:decomposition}.
\begin{align}
|\Pi_T(\mui) - \Pi_T^\star| &= \left|\hat{\Pi}_T(\mu_\infty) + \Delta_T(\mui) - \hat{\Pi}_T(\muT) - \Delta_T(\muT)\right|\nonumber\\
&\leq |\hat{\Pi}_T(\mui) - \hat{\Pi}_T(\muT)| + |\Delta_T(\mui)| + |\Delta_T(\muT)|\nonumber\\
&= |\hat{\Pi}_T(\mui) - \hat{\Pi}_T(\muT)| + O(1/T^2),
\end{align}
since $\Delta_T(\mu) = O(1/T^2)$ by Proposition \ref{prop:truncation_error}. next, we find an upper bound for $|\hat{\Pi}_T(x) - \hat{\Pi}_T(y)|$ in terms of the difference between $x$ and $y$. 
For simplicity, denote $\hat{x} = x - \l$ and $\hat{y} = y-\l$, implying $\hat{x}-\hat{y}=x-y$. Then using the expression of in \eqref{eq:mainResult} we get
\begin{align*}
|\hat{\Pi}_T(\mui) - \hat{\Pi}_T(\muT)| &= 
\left| \a(\hat{x}-\hat{y})
+\left(\frac{\l u_2}{2} + \frac{\E[Q(0)^2]}{2T}\right)\left(\frac{1}{\hx}-\frac{1}{\hy}\right) \right. \\
& \qquad \left. -\frac{\l^2 u_2^2}{4T}\left(\frac{1}{\hx^3}-\frac{1}{\hy^3}\right) 
-\frac{\l u_3}{6T}\left(\frac{1}{\hx^2} - \frac{1}{\hy^2}\right)
\right|.
\end{align*}
Furthermore, we have 
\begin{align*}
\frac 1 \hx - \frac 1 \hy &= -\frac{\hx-\hy}{\hy^2} + \frac{(\hx-\hy)^2}{\hy^3} + O\left((x-y)^3\right),\\
\frac 1 {\hx^2} - \frac 1 {\hy^2} &= -\frac{2(\hx-\hy)}{\hy^3} + \frac{3(\hx-\hy)^2}{\hy^4} + O\left((x-y)^3\right),\\
\frac 1 {\hx^3} - \frac  1 {\hy^3} &= -\frac{3(\hx-\hy)}{\hy^4} + \frac{6(\hx-\hy)^2}{\hy^5} + O\left((x-y)^3\right),\\
\end{align*}
Substituting these yields 
\begin{align*}
|\hat{\Pi}_T(x) - \hat{\Pi}_T(y)| &= \left|(x-y)\left[ \a - \frac{\l u_2}{2 \hy^2} + \frac{1}{2T\hy^2}\left(\E[Q(0)^2] + \frac{3\l^2 u_2^2}{2\hy^2} + \frac{2\l u_3}{3 \hy}\right)\right]\right. \\
&\qquad \left. - (x-y)^2\left[ \frac{\l u_2}{2 \hy^3} + \frac{1}{2T\hy^3}\left(\E[Q(0)^2] - \frac{3\l^2 u_2^2}{\hy^2} - \frac{\l u_3}{\hy}\right)\right]\right| + O\left((x-y)^3\right).
\end{align*}
Given that $\muT = \mui + \mu_\bullet/T + o(1/T)$, we find
\begin{align*}
|\hat{\Pi}_T(\mui) - \hat{\Pi}_T(\muT)| &= \frac{|\mu_\bullet|}{T}\left(\a - \frac{\l u_2}{2(\mui-\l u_1)^2}\right) + O(1/T^2)\\
&= \frac{|\mu_\bullet|}{T}\left(\a - \frac{\l u_2}{2(\sqrt{\l u_2/2\a})^2}\right) + O(1/T^2) = O(1/T^2),
\end{align*}
which concludes the proof.
\end{proof}
\end{document}